\newtheorem{condition}[theorem]{\textbf{Condition}}
\setlist[enumerate]{leftmargin=.5in}
\setlist[itemize]{leftmargin=.5in}
\crefname{hypothesis}{Hypothesis}{Hypotheses}
\title{On the Convergence Rate of Projected Gradient Descent \\ for a Back-Projection based Objective\thanks{
Accepted to SIAM Journal on Imaging Sciences (SIIMS).
\funding{This research is supported by ERC-StG grant no. 757497 (SPADE) and gifts from NVIDIA, Amazon, and Google.}}}
\author{Tom Tirer\thanks{School of Electrical Engineering, Tel Aviv University, Tel Aviv, Israel 
  (\email{tirer.tom@gmail.com}).}
\and Raja Giryes\thanks{School of Electrical Engineering, Tel Aviv University, Tel Aviv, Israel 
  (\email{raja@tauex.tau.ac.il}).}}
\def\A{\bm{A}}
\def\D{\bm{D}}
\def\I{\bm{I}}
\def\P{\bm{P}}
\def\Q{\bm{Q}}
\def\U{\bm{U}}
\def\V{\bm{V}}
\def\W{\bm{W}}
\def\e{\bm{e}}
\def\g{\bm{g}}
\def\h{\bm{h}}
\def\u{\bm{u}}
\def\v{\bm{v}}
\def\x{\bm{x}}
\def\y{\bm{y}}
\def\z{\bm{z}}
\def\bLambda{\boldsymbol{\Lambda}}
\def\bxi{\bm{\xi}}
\def\infim#1{\underset{#1}{\mathrm{inf}}}
\def\argmin#1{\underset{#1}{\textrm{argmin}}}
\def\maxim#1{\underset{#1}{\textrm{max}}}
\def\supr#1{\underset{#1}{\mathrm{sup}}}
\def\minim#1{\underset{#1}{\mathrm{min}}}
\def\0{\mathbf{0}}
\def\1{\mathbf{1}}
\newcommand{\tomt}[1]{\textcolor{black}{#1}}
\begin{document}

\maketitle

\begin{abstract}
Ill-posed linear inverse problems appear in many scientific setups,
and are typically addressed by solving optimization problems, which are composed of data fidelity and prior terms. 
Recently, several works have considered a back-projection (BP) based fidelity term as an alternative to the common least squares (LS), and demonstrated excellent results for popular inverse problems. 
These works have also empirically shown that using the BP term, rather than the LS term, requires fewer iterations of 
optimization algorithms. 
In this paper, we examine the convergence rate of the projected gradient descent (PGD) algorithm for the BP objective. 
Our analysis allows to identify an inherent source for its faster convergence compared to using the LS objective, while making only mild assumptions. 
We also analyze the more general proximal gradient method under a relaxed contraction condition on the proximal mapping of the prior. This analysis further highlights the advantage of BP when the linear measurement operator is badly conditioned. 
Numerical experiments with both $\ell_1$-norm and GAN-based priors 
corroborate our theoretical results. 

\end{abstract}

\begin{keywords}
  Inverse problems, image restoration, projected gradient descent, proximal gradient method.
\end{keywords}

\begin{AMS}
  	65K10, 62H35, 68U10, 94A08

\end{AMS}


\section{Introduction}
\label{sec:intro}

The task of recovering a signal from its observations that are obtained by some acquisition process is common in many fields of science and engineering, and referred to as an {\em inverse problem}. 
In imaging science, the inverse problems are often linear, in the sense that the observations can be formulated by a linear model
\begin{align}
\label{Eq_general_model}
\y = \A\x_{gt} + \e,
\end{align}
where $\tomt{\x_{gt}} \in \mathbb{R}^n$ represents the unknown 
\tomt{ground truth} 
image, $\y \in \mathbb{R}^m$ represents the observations, $\A$ is an $m \times n$ measurement matrix, 
$\e \in \mathbb{R}^m$ is a noise vector, 
and typically $m \leq n$. 
For example, this model corresponds to tasks like denoising \cite{rudin1992nonlinear,donoho1995noising,dabov2007image}, deblurring \cite{biemond1990iterative,danielyan2012bm3d},  super-resolution \cite{sun2008image,yang2010image}, and compressed sensing \cite{donoho2006compressed, candes2008introduction}. 

A common strategy for recovering $\x_{gt}$ is to solve an optimization problem, which is composed of a fidelity term $\ell(\cdot)$ that enforces agreement with the observations $\y$, and a prior term $s(\cdot)$, 
which is inevitable as the inverse problems represented by \eqref{Eq_general_model} are usually ill-posed (i.e., the measurements do not suffice for obtaining a successful reconstruction). 
The optimization problem is usually stated in a penalized form
\begin{align}
\label{Eq_cost_func_general}
\minim{{\x}} \,\, \ell({\x}) + \beta s({\x}),
\end{align}
or in a constrained form
\begin{align}
\label{Eq_cost_func_general2}
\minim{{\x}} \, \ell({\x}) \,\,\,\,\, \mathrm{s.t.} \,\,\,\, s({\x}) \leq R,
\end{align}
where $\beta$ and $R$ are positive scalars that control the regularization level  
and ${\x}$ is the optimization variable.

While a vast amount of research has focused on designing good prior models, most of the works use a typical least squares (LS) fidelity term 
\begin{align}
\label{Eq_fidelity_typical}
\ell_{LS}({\x}) \triangleq  \frac{1}{2} \| \y-\A{\x} \|_2^2,
\end{align}
where $\| \cdot \|_2$ stands for the Euclidean norm. 
Using the LS term is common perhaps because it can be derived from the negative log-likelihood function under the assumption of white Gaussian noise. 
However, even under this assumption, note that in general the maximum likelihood estimation has optimality properties only when the number of measurements is much larger than
the number of unknown variables, {\em which is obviously not the case in ill-posed problems}.

Recently, a different fidelity term, 
dubbed as the ``back-projection" (BP) term, has been 
identified and studied 
\cite{tirer2019back}.
Assuming that $m \leq n$ and $\mathrm{rank}(\A)=m$ (which is the common case, e.g., in super-resolution and compressed sensing tasks), this term 
can be written as 
\begin{align}
\label{Eq_fidelity_idbp}
\ell_{BP}({\x}) \triangleq  \frac{1}{2} \| \A^\dagger ( \y - \A {\x})  \|_2^2,
\end{align}
where $\A^\dagger \triangleq \A^T(\A\A^T)^{-1}$ is the pseudoinverse of $\A$, 
or equivalently\footnote{\tomt{The equivalence follows from the identities $(\A^\dagger)^T \A^\dagger = (\A\A^T)^{-1}$ and $\A^\dagger\A=(\A^\dagger\A)^T=(\A^\dagger\A)^2$, and the expansion of the two quadratic forms: $ \| \A^\dagger ( \y - \A \x)  \|_2^2 = \| \A^\dagger\y \|_2^2 -2\y^T(\A^\dagger)^T\A^\dagger\A\x + \| \A^\dagger\A\x \|_2^2 = \y^T(\A\A^T)^{-1}\y -2\y^T(\A\A^T)^{-1}\A\x +\x^T\A^T(\A\A^T)^{-1}\A\x = \| (\A\A^T)^{-\frac{1}{2}} ( \y - \A \x ) \|_2^2$.}}
as
\begin{align}
\label{Eq_fidelity_idbp2}
\ell_{BP}({\x}) =  \frac{1}{2} \| (\A\A^T)^{-\frac{1}{2}} ( \y - \A {\x} ) \|_2^2.
\end{align}

The BP fidelity term has been {\em implicitly} used in the IDBP framework
\cite{tirer2019image}, where it has been combined with plug-and-play denoisers such as BM3D \cite{dabov2007image} and DnCNN \cite{zhang2017beyond} and demonstrated state-of-the-art reconstruction results for image super-resolution \cite{tirer2018super} and deblurring \cite{tirer2019image, tirer2018icip}. 
The BP term also {\em implicitly} relates to the compressed sensing method in \cite{liao2014generalized,yuan2021plug}. 
The {\em explicit} connection of previous works to \eqref{Eq_fidelity_idbp} has been 
pointed out 
in \cite{tirer2019back} and follows from applying the proximal gradient method on $\ell_{BP}({\x})+\beta s({\x})$. 

\tomt{
The work in \cite{tirer2019back} has focused on examining and comparing the LS and BP terms from an estimation accuracy point of view. By mathematically analyzing the cost functions for the Tikhonov regularization prior, and empirically studying more sophisticated priors, it has identified cases (such as tasks for which $\A\A^T$ is badly conditioned) where the BP term yields reconstructions with better mean squared error (MSE) than the LS term.} 

\tomt{
To intuitively understand why using the BP term can yield a recovery with better MSE, observe the following. Let $\A=\U \bLambda \V^T$ be the singular value decomposition (SVD) of $\A$, namely, $\U \in \mathbb{R}^{m \times m}$ and $\V \in \mathbb{R}^{n \times n}$ are orthogonal matrices and $\bLambda$ is an $m \times n$ rectangular diagonal matrix with nonzero singular values $\{ \lambda_i \}_{i=1}^m$ on the diagonal. In the noiseless case ($\y=\A\x_{gt}$), it can be shown (see \cite{tirer2019back}) that 
$\ell_{LS}({\x}) = \frac{1}{2} \sum_{i=1}^m \lambda_i^2 | \v_i^T({\x}-\x_{gt})|^2$ 
and
$\ell_{BP}({\x}) = \frac{1}{2} \sum_{i=1}^m | \v_i^T({\x}-\x_{gt})|^2$,
where $\v_i$ is the $i$th column of $\V$. Observe that $\ell_{BP}({\x})$ equally weighs all the error components $\{ | \v_i^T({\x}-\x_{gt})|^2 \}_{i=1}^m$, contrary to $\ell_{LS}({\x})$, which weighs them according to $\{ \lambda_i^2 \}_{i=1}^m$. Now, note the similarity between $\ell_{BP}({\x})$ and the MSE, which can be formulated as $\| {\x} - \x_{gt} \|_2^2 = \sum_{i=1}^n | \v_i^T({\x}-\x_{gt})|^2 $ (note that the sum here goes over all the $n$ basis vectors in $\V$). Clearly, 
the error components are equally weighted in the MSE (as in BP). 
In the noisy case, a more intricate analysis is required \cite{tirer2019back}. Interestingly, a recent paper \cite{abu2021image} has shown also a connection between $\ell_{BP}({\x})$ and 
an {\em estimator} of the MSE (independent of $\x_{gt}$), namely, an adaptation of Stein's unbiased risk estimate (SURE) \cite{Stein:1981vf} to linear ill-posed problems \cite{eldar2008generalized}.}

Recent applications of the explicit BP term include 
\cite{sabulal2020joint,zukerman2020bp,yogev2021interpretation}. 
The BP term is also related to ALISTA \cite{liu2019alista}, which is very similar to IDBP with $s(\cdot)$ being the $\ell_1$-norm. 
ALISTA essentially enjoys the benefits of the BP term 
to get a good initial state that already converges fast, and accelerates it by learning the step-sizes and the soft-thresholds (more details in Appendix~\ref{app:alista}).

Empirical results in previous works have shown that using the BP term, rather than the LS term, {\em requires fewer iterations} of optimization algorithms, such as  
proximal gradient methods. 
This implies reduced overall run-time when
the operator $\A^\dagger$ has fast implementation (e.g., in  
deblurring and super-resolution) 
or if the proximal operation dominates the computational cost of each iteration. 
We emphasize that this convergence advantage of BP over LS has not been mathematically analyzed in prior works.

\textbf{Contribution.} 
In this paper, we provide mathematical reasoning for the faster convergence of BP compared to LS, for both projected gradient descent (PGD) applied on the constrained form \eqref{Eq_cost_func_general2}, and the more general proximal gradient method 
applied on the penalized form \eqref{Eq_cost_func_general}.
Our analysis for PGD (Section~\ref{sec:convergence}), which is inspired by the analysis in \cite{oymak2017sharp}, requires very mild assumptions and allows us to identify sources for the different convergence rates.
Our analysis for proximal methods (Section \ref{sec:convergence_beyond}) 
requires a relaxed contraction condition on the proximal mapping of the prior under which it 
further highlights the advantage of BP when $\A\A^T$ is badly conditioned. 
Numerical experiments (Section~\ref{sec:exp}) 
corroborate our theoretical results for PGD with both convex ($\ell_1$-norm) and non-convex (pre-trained DCGAN \cite{radford2015unsupervised}) priors.
For the $\ell_1$-norm prior, we also present experiments for proximal methods, and connect them with our analysis. 

Importantly, notice that using the BP term 
rather than the LS is fundamentally different than the technique of preconditioning, where the optimization objective and minimizers {\em are not modified} and existing acceleration results require strong convexity of the objective, which is not the case here 
(more details in Appendix~\ref{app:precond}).


\section{Preliminaries}
\label{sec:prelim}

Let us 
present notations and definitions 
that are used in the paper. 
We write $\|\cdot\|_2$ for the Euclidean norm of a vector, $\|\cdot\|$ for the spectral norm of a matrix, and $\sigma_{max}(\cdot)$ and $\sigma_{min}(\cdot)$ for the largest and smallest eigenvalue of a matrix, respectively.
We denote the unit Euclidean ball and sphere in $\mathbb{R}^n$ by $\mathbb{B}^n$ and $\mathbb{S}^{n-1}$, respectively.
We denote by \tomt{$\mathcal{P}_{\mathcal{K}}(\z)$ the Euclidean projection of $\z$ onto the set $\mathcal{K}$}. 
We denote by $\I_n$ the identity matrix in $\mathbb{R}^n$, and 
by $\P_{A} \triangleq \A^\dagger \A$ and $\Q_{A} \triangleq \I_n - \P_{A}$ the \tomt{orthogonal projection matrices that project vectors from $\mathbb{R}^n$} onto the row space  
and the null space (respectively) of 
the full row-rank matrix $\A$. 
Let us also define the descent set and its tangent cone \cite{chandrasekaran2012convex} as follows.

\begin{definition}
\label{def:DnC}
The descent set of the function $s$ 
at a point $\x_{*}$ is defined as
\begin{align}
\mathcal{D}_{s}(\x_{*}) \triangleq \{ \h \in \mathbb{R}^n : s(\x_{*} + \h) \leq s(\x_{*}) \}.
\end{align}
The tangent cone $\mathcal{C}_{s}(\x_{*})$ at a point $\x_{*}$ is the smallest closed cone satisfying $\mathcal{D}_{s}(\x_{*}) \subseteq \mathcal{C}_{s}(\x_{*})$.
\end{definition}

In this paper, we largely focus on minimizing \eqref{Eq_cost_func_general2} using PGD, i.e., by applying iterations of the form
\begin{align}
\label{Eq_pgd_iter}
{\x}_{t+1} = \mathcal{P}_{\mathcal{K}} \left ( {\x}_{t} - \mu \nabla \ell({\x}_{t}) \right ),
\end{align}
where $\nabla\ell({\x})$ is the gradient of $\ell(\cdot)$ at ${\x}$, $\mu$ is a step-size, and 
\begin{align}
\label{Eq_set_Kr}
\mathcal{K} \triangleq \left \{ {\x} \in \mathbb{R}^n : s({\x}) \leq R \right \}.
\end{align}
Note that 
\begin{align}
\label{Eq_fidelity_grads}
\nabla \ell_{LS}({\x}) &= \A^T ( \A {\x} - \y ), \nonumber \\
\nabla \ell_{BP}({\x}) &= \A^\dagger ( \A {\x} - \y ).
\end{align}
Therefore, we can examine a unified formulation of PGD for both objectives
\begin{align}
\label{Eq_pgd_iter2}
{\x}_{t+1} = \mathcal{P}_{\mathcal{K}} \left ( {\x}_{t} + \mu \W ( \y - \A {\x}_t ) \right ),
\end{align}
where $\W$ equals $\A^T$ or $\A^\dagger$ for the LS and BP terms, respectively.


\section{Comparing PGD Convergence Rates}
\label{sec:convergence}

The goal of this section is to provide a mathematical reasoning for the observation (shown in Section \ref{sec:exp}) that 
using the BP term, rather than the LS term, requires fewer PGD iterations. 
We start in Section~\ref{sec:toy} with a warm-up example with a very restrictive prior that fixes the value of ${\x}$ on the null space of $\A$, which provides us with some intuition as to the advantage of BP. 
Then, in Sections \ref{sec:general} - \ref{sec:convergence_ext} we build on the analysis technique in \cite{oymak2017sharp} to show that the advantage of BP carries on to practical priors.

\subsection{Warm-Up: Restrictive ``Oracle'' Prior}
\label{sec:toy}

Let us define the following 
``oracle''\footnote{In fact, the results in this warm-up require that the prior fixes $\Q_{A}{\x}$ to a constant value on the null space of $\A$, but the value itself does not affect the convergence rates.} prior that fixes the value of ${\x}$ on the null space of $\A$ to that of the latent $\x_{gt}$
\begin{equation}
\label{Eq_oracle_prior}
s_{oracle}({\x})=
\begin{cases} 
      0, & {\x}: \Q_{A}{\x}=\Q_{A}\x_{gt} \\
      +\infty,  & otherwise
   \end{cases}.
\end{equation}
Applying the PGD update rule from \eqref{Eq_pgd_iter2} using this prior, we have
\begin{equation}
\label{Eq_oracle_pgd}
{\x}_{t+1} = \P_{A} \left ( {\x}_{t} + \mu \W ( \y - \A {\x_t} ) \right ) + \Q_{A}\x_{gt}.  
\end{equation}
In the following, we specialize \eqref{Eq_oracle_pgd} for LS and BP with 
step-size of 1 over the 
Lipschitz constant of $\nabla \ell(\cdot)$. 
This step-size 
is perhaps the most common choice of practitioners, as it ensures (sublinear) convergence of the sequence $\{ {\x}_{t} \}$ for general convex priors \cite{beck2009fast} (i.e., for larger {\em constant} step-size, PGD and general proximal methods may ``swing" and not converge). 
Detailed explanation for the popularity of this constant step-size is given in Appendix~\ref{app:StepSize}. 
Here, due to the constant Hessian matrix $\nabla^2 \ell$ for LS and BP, this step-size can be computed as $\| \nabla^2 \ell \|^{-1}$.

\textbf{LS case:} 
For the LS objective, we have $\W=\A^T$ and $\mu_{LS} =  \| \nabla^2 \ell_{LS}\|^{-1} = \| \A^T\A \|^{-1} = 1/\sigma_{max}(\A\A^T)$.
So, 
\begin{align}
{\x}_{t+1}^{LS} &= \P_{A} \left ( {\x}_{t}^{LS} + \mu_{LS} \A^T ( \y - \A {\x}_t^{LS} ) \right ) + \Q_{A}\x_{gt}  \nonumber \\
&= \P_{A} \left ( ( \I_n - \mu_{LS} \A^T \A ) {\x}_{t}^{LS} + \mu_{LS} \A^T \y \right ) + \Q_{A}\x_{gt}  \nonumber \\
&= ( \P_{A} - \mu_{LS} \A^T \A ) {\x}_{t}^{LS} + \mu_{LS} \A^T \y + \Q_{A}\x_{gt}. 
\end{align}
Let $\x_*^{LS}$ be the stationary point of the sequence $\{ {\x}_{t}^{LS} \}$, i.e., $\x_*^{LS} = ( \P_{A} - \mu_{LS} \A^T \A ) \x_*^{LS} + \mu_{LS} \A^T \y + \Q_{A}\x_{gt}$.
The convergence rate can be obtained as follows
\begin{align}
\| {\x}_{t+1}^{LS} - \x_*^{LS} \|_2 &= \|  ( \P_{A} - \mu_{LS} \A^T \A ) ({\x}_{t}^{LS} - \x_*^{LS} ) \|_2 \nonumber \\
& \leq \left (1- \frac{\sigma_{min}(\A\A^T)}{\sigma_{max}(\A\A^T)} \right ) \| {\x}_{t}^{LS} - \x_*^{LS}  \|_2. 
\end{align}

\textbf{BP case:} 
For the BP objective, we have $\W=\A^\dagger$ and $\mu_{BP} =  \| \nabla^2 \ell_{BP} \|^{-1} = \| \A^\dagger\A \|^{-1}=1$, where the last equality follows from the fact that $\P_{A}=\A^\dagger\A$ is a non-trivial orthogonal projection. 
Substituting these terms in \eqref{Eq_oracle_pgd}, we get
\begin{align}
{\x}_{t+1}^{BP} &= \P_{A} \left ( {\x}_{t}^{BP} + \A^\dagger ( \y - \A {\x}_t^{BP} ) \right ) + \Q_{A}\x_{gt}  \nonumber \\
&= \P_{A} \left ( \Q_{A} {\x}_{t}^{BP} + \A^\dagger \y \right ) + \Q_{A}\x_{gt}  \nonumber \\
&= \A^\dagger \y + \Q_{A}\x_{gt}. 
\end{align}

Note that while the use of LS objective leads to linear convergence rate of $1- \frac{\sigma_{min}(\A\A^T)}{\sigma_{max}(\A\A^T)}$, using BP objective requires only a {\em single} iteration. 
This result hints that an advantage of BP may exist even for 
practical priors $s({\x})$, which only implicitly impose some restrictions on $\Q_{A}{\x}$.

\subsection{General Analysis}
\label{sec:general}

The following theorem provides a term that characterizes the convergence rate of PGD for both LS and BP objectives 
for general priors. 
It is closely related to Theorem 2 in \cite{oymak2017sharp}. The difference is twofold. First, the theorem in \cite{oymak2017sharp} considers only the LS objective and its derivation is not valid for the BP objective. 
Second, as the authors of \cite{oymak2017sharp} focus on the estimation error, they examine 
$\|{\x}_{t}-\x_{gt}\|_2$, where $\x_{gt}$ is the unknown ground truth signal, and assume 
that $s(\x_{gt})$ is known, which allows to set $R=s(\x_{gt})$. 
In contrast, we generalize the theory for both LS and BP objectives, and for an arbitrary value of $R$. 
Among others, 
our theorem covers any stationary point $\x_*$ of the PGD scheme \eqref{Eq_pgd_iter2} (i.e., an optimal point for convex $s(\cdot)$) for which $s(\x_*)=R$.\footnote{Essentially, we require that $R$ is small enough such that the prior is not meaningless.}
The proofs of the theorem and its following propositions are deferred to Appendix~\ref{app:proofs}.

\begin{theorem}
\label{theorem1}
Let $s : \mathbb{R}^n \rightarrow \mathbb{R}$ be a lower semi-continuous function, 
and
let $\x_*$ be a point on the boundary of $\mathcal{K}$, i.e., $s(\x_*)=R$.
Let $\kappa_s$ be a constant that is equal to 1 for convex $s$ and equal to 2 otherwise. 
Then, the sequence $\{{\x}_t\}$ obtained by 
\eqref{Eq_pgd_iter2} obeys
\begin{equation}
\label{Eq_pgd_thm}
\|{\x}_{t+1}-\x_*\|_2 \leq \kappa_s \rho(\mathcal{C}_{s}(\x_{*})) \|{\x}_{t}-\x_*\|_2 + \kappa_s \mu \xi(\mathcal{C}_{s}(\x_{*})),
\end{equation}
where
\begin{align}
\label{Eq_pgd_thm_def}
\rho(\mathcal{C}_{s}(\x_{*})) &\triangleq \supr{ \bm{u},\bm{v} \in \mathcal{C}_{s}(\x_{*}) \cap \mathbb{B}^n } \bm{u}^T (\I_n - \mu \W \A) \bm{v}, \nonumber \\
\xi(\mathcal{C}_{s}(\x_{*})) &\triangleq \supr{ \bm{v} \in \mathcal{C}_{s}(\x_{*}) \cap \mathbb{B}^n } \bm{v}^T \W (\y - \A\x_*).
\end{align}
\end{theorem}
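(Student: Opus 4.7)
The plan is to extend the PGD analysis of \cite{oymak2017sharp} by decoupling the projection step from the gradient step and by allowing a generic linear operator $\W$ in place of $\A^T$. Set $\h_t \triangleq \x_t - \x_*$ and $\z_t \triangleq \x_t + \mu\W(\y - \A\x_t)$, so that the PGD update \eqref{Eq_pgd_iter2} is simply $\x_{t+1} = \mathcal{P}_\mathcal{K}(\z_t)$. A direct rearrangement gives $\z_t - \x_* = (\I_n - \mu\W\A)\h_t + \mu\W(\y - \A\x_*)$, which is precisely the expression I ultimately want to split into the two terms on the right-hand side of \eqref{Eq_pgd_thm}.

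The first key step is a projection inequality of the form $\|\h_{t+1}\|_2^2 \leq \kappa_s \langle \h_{t+1}, \z_t - \x_* \rangle$. For convex $s$ the set $\mathcal{K}$ is convex, and the classical variational characterization $\langle \z_t - \x_{t+1}, \x_{t+1} - \x_* \rangle \geq 0$ (valid since $\x_* \in \mathcal{K}$) rearranges to give this inequality with $\kappa_s = 1$. For non-convex $s$, the closedness of $\mathcal{K}$ (inherited from lower semi-continuity of $s$) still ensures a well-defined nearest point, and the defining inequality $\|\x_{t+1} - \z_t\|_2 \leq \|\x_* - \z_t\|_2$ can be squared and simplified to yield the same bound with the factor $\kappa_s = 2$.

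The second key step exploits the tangent cone. Because the PGD iteration sends every update into $\mathcal{K}$, we have $\x_{t+1}, \x_t \in \mathcal{K}$, and combined with $s(\x_*) = R$ this places both $\h_{t+1}$ and $\h_t$ inside $\mathcal{D}_s(\x_*) \subseteq \mathcal{C}_s(\x_*)$. Dividing the inequality of the first step by $\|\h_{t+1}\|_2$ (the case $\|\h_{t+1}\|_2 = 0$ being trivial) and writing $\h_t = \|\h_t\|_2 \hat{\h}_t$, one arrives at
\[
\|\h_{t+1}\|_2 \leq \kappa_s \|\h_t\|_2 \cdot \hat{\h}_{t+1}^T(\I_n - \mu\W\A)\hat{\h}_t + \kappa_s \mu \cdot \hat{\h}_{t+1}^T\W(\y - \A\x_*),
\]
where $\hat{\h}_{t+1}, \hat{\h}_t \in \mathcal{C}_s(\x_*) \cap \mathbb{S}^{n-1} \subseteq \mathcal{C}_s(\x_*) \cap \mathbb{B}^n$. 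Upper-bounding each of the two terms by the corresponding supremum in \eqref{Eq_pgd_thm_def} then delivers \eqref{Eq_pgd_thm}.

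The main obstacle is the non-convex case, since one cannot invoke the firm non-expansiveness of the projection. Once the squared distance is expanded carefully, however, the factor $\kappa_s = 2$ emerges cleanly, and the remainder of the argument is entirely insensitive to whether $\W$ is $\A^T$ or $\A^\dagger$ --- the fidelity operator enters only through the scalars $\rho$ and $\xi$, so a single derivation covers both the LS and BP instantiations and places them on a common footing for the comparison carried out in the subsequent sections.
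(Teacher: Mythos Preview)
Your argument is correct and leads to the same bound, but the route differs from the paper's. The paper quotes three lemmas from \cite{oymak2017sharp}: a shift identity $\mathcal{P}_{\mathcal{K}}(\bm{u}+\bm{v})-\bm{u}=\mathcal{P}_{\mathcal{K}-\bm{u}}(\bm{v})$, the cone-projection bound $\|\mathcal{P}_{\mathcal{D}}(\bm{v})\|_2 \leq \kappa_s \|\mathcal{P}_{\mathcal{C}}(\bm{v})\|_2$, and the dual formula $\|\mathcal{P}_{\mathcal{C}}(\bm{v})\|_2 = \sup_{\bm{u}\in\mathcal{C}\cap\mathbb{B}^n}\bm{u}^T\bm{v}$. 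It then chains these to arrive at $\|\h_{t+1}\|_2 \leq \kappa_s \sup_{\bm{v}\in\mathcal{C}\cap\mathbb{B}^n}\bm{v}^T(\z_t-\x_*)$ before splitting into $\rho$ and $\xi$. You instead obtain the inequality $\|\h_{t+1}\|_2^2 \leq \kappa_s\langle \h_{t+1},\z_t-\x_*\rangle$ directly from the nearest-point definition of $\mathcal{P}_{\mathcal{K}}$ (the obtuse-angle inequality in the convex case, and the squared-distance comparison $\|\x_{t+1}-\z_t\|_2\leq\|\x_*-\z_t\|_2$ in the non-convex case), then divide through and use that $\hat{\h}_{t+1}\in\mathcal{C}_s(\x_*)\cap\mathbb{S}^{n-1}$ to reach the same supremum. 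Your route is more self-contained: it effectively inlines the content of the cone lemmas for this specific application and makes the origin of the factor $\kappa_s$ completely transparent. The paper's route has the advantage of modularity, isolating the geometric facts about cones as reusable statements. Both arguments handle the generic $\W$ identically, which is indeed the point that enables the unified LS/BP comparison.
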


When $\kappa_s \rho(\mathcal{C}_{s}(\x_{*})) < 1$, 
Theorem~\ref{theorem1} 
implies linear convergence 
\tomt{(up to an error term that can be eliminated in certain settings)} 
and provides characterization of its rate.
\tomt{
The key for obtaining linear convergence, without general strong convexity of the problem, is the restriction that the prior imposes on the signal 
in the null-space of $\A$.
This restriction is captured by lower bounding above zero the ``restricted smallest eigenvalue" of $\A^T\A$ and $\A^\dagger\A$ (for LS and BP terms, respectively), which takes into account the prior --- it is being searched for only in $\mathcal{C}_{s}(\x_{*})$ (the tangent cone of the prior's descent set at $\x_{*}$, recall Definition~\ref{def:DnC}).} 
We elaborate on this separately 
for LS and BP, 
below Propositions \ref{prop_ls} and \ref{prop_bp}, respectively.

Assuming that $\kappa_s \rho(\mathcal{C}_{s}(\x_{*})) < 1$,  
the term $\xi(\mathcal{C}_{s}(\x_{*}))$ 
belongs to the component of the bound \eqref{Eq_pgd_thm} 
which cannot be compensated for by using more iterations. 
Note that if $R=s(\x_{gt})$, then 
Theorem~\ref{theorem1} can be applied with $\x_*=\x_{gt}$. 
In this case, $\xi(\mathcal{C}_{s}(\x_{*}))=\xi(\mathcal{C}_{s}(\x_{gt}))$ characterizes the estimation error $\lim_{t\to\infty} \|{\x}_{t}-\x_{gt}\|_2$ (up to a factor due to the recursion in \eqref{Eq_pgd_thm}). 
Moreover, $\y - \A\x_*=\y - \A\x_{gt}=\e$, so 
the term $\xi(\mathcal{C}_{s}(\x_{gt}))$ 
vanishes if there is no noise.
That is, for $R=s(\x_{gt})$ and no noise, we have 
\begin{align}
\label{Eq_pgd_thm_noiseless}
\|{\x}_{t+1}-\x_{gt}\|_2 \leq \kappa_s \rho(\mathcal{C}_{s}(\x_{gt})) \|{\x}_{t}-\x_{gt}\|_2.
\end{align}

In 
practice, one typically does not know the value of $s(\x_{gt})$ and often $R$ that is not equal to $s(\x_{gt})$ provides better results in the presence of noise 
or when $s(\cdot)$ is non-convex.
Therefore, in this work we aim  
to compare the convergence rates for LS and BP objectives for arbitrary values of $R$. 

As we consider arbitrary $R$, we focus 
on $\x_*=\lim_{t\to\infty}{\x}_{t}$, i.e., the stationary point obtained by PGD. In this case $\lim_{t\to\infty} \|{\x}_{t}-\x_*\|_2=0$, and thus the component with $\xi(\mathcal{C}_{s}(\x_{*}))$ in \eqref{Eq_pgd_thm} presents slackness that is a consequence of the proof technique. 
To further see that $\xi(\mathcal{C}_{s}(\x_{*}))$ is not expected to affect the conclusions of our analysis,  
note that we examine PGD with step-sizes that ensure convergence in convex settings \cite{beck2009fast} (namely, with the common step-size of $\| \nabla^2 \ell \|^{-1}$). Therefore, for convex $s(\cdot)$ misbehavior of $\{{\x}_{t}\}$ like ``swinging'' is not possible.  
Empirically, monotonic convergence of $\{{\x}_{t}\}$ is observed in Section~\ref{sec:exp} even for highly non-convex prior such as DCGAN.

In the rest of this section we focus 
on the term $\rho(\mathcal{C}_{s}(\x_{*}))$ in \eqref{Eq_pgd_thm}. 
Whenever $\kappa_s \rho(\mathcal{C}_{s}(\x_{*})) < 1$, 
this term characterizes the convergence rate of PGD: {\em smaller $\rho$ implies faster convergence}. 
We start with specializing and bounding it for $\ell_{LS}({\x})$ and $\ell_{BP}({\x})$. 

\begin{proposition}
\label{prop_ls}
Consider the LS objective $\ell_{LS}({\x})$ and step-size $\mu_{LS} \triangleq  \| \nabla^2 \ell_{LS} \|^{-1}$. We have
\begin{align}
\label{Eq_pgd_rate_ls}
\rho(\mathcal{C}_{s}(\x_{*})) & \leq 
1 - \frac{1}{\| \A^T\A \|} \infim{ \bm{u} \in \mathcal{C}_{s}(\x_{*}) \cap \mathbb{S}^{n-1} } \| \A \bm{u} \|_2^2 \nonumber \\
& \triangleq P_{LS}(\mathcal{C}_{s}(\x_{*})).
\end{align}
\end{proposition}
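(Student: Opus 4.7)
The plan is to substitute the LS-specific quantities into the definition of $\rho(\mathcal{C}_{s}(\x_{*}))$ and then reduce the bilinear supremum to a standard quadratic form supremum, which directly produces the stated bound. For the LS objective we have $\W = \A^T$, so $\W\A = \A^T\A$ is symmetric positive semidefinite, and the step-size is $\mu_{LS} = 1/\| \A^T\A \|$. Consequently the matrix
\begin{equation*}
\M \triangleq \I_n - \mu_{LS} \A^T \A
\end{equation*}
satisfies $\mathbf{0} \preceq \M \preceq \I_n$, in particular $\M$ is symmetric positive semidefinite.

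Next I would show that for any symmetric PSD matrix $\M$ and any set $\mathcal{T}$, the bilinear supremum collapses to a quadratic one:
\begin{equation*}
\sup_{\bm{u},\bm{v} \in \mathcal{T} \cap \mathbb{B}^n} \bm{u}^T \M \bm{v} \;=\; \sup_{\bm{u} \in \mathcal{T} \cap \mathbb{B}^n} \bm{u}^T \M \bm{u}.
\end{equation*}
The inequality ``$\leq$'' follows from the Cauchy--Schwarz inequality applied to the PSD form $(\bm{u},\bm{v}) \mapsto \bm{u}^T \M \bm{v}$, giving $\bm{u}^T \M \bm{v} \leq \sqrt{\bm{u}^T \M \bm{u}} \sqrt{\bm{v}^T \M \bm{v}}$, and ``$\geq$'' is obtained by taking $\bm{u}=\bm{v}$.

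I would then use that $\mathcal{C}_{s}(\x_{*})$ is a cone and $\M$ is PSD, so the quadratic form $\bm{u}^T \M \bm{u}$ is nondecreasing in $\|\bm{u}\|_2$ along any ray in the cone; hence the supremum over $\mathcal{C}_{s}(\x_{*}) \cap \mathbb{B}^n$ is attained on $\mathcal{C}_{s}(\x_{*}) \cap \mathbb{S}^{n-1}$. Expanding,
\begin{equation*}
\sup_{\bm{u} \in \mathcal{C}_{s}(\x_{*}) \cap \mathbb{S}^{n-1}} \bm{u}^T (\I_n - \mu_{LS} \A^T \A) \bm{u} \;=\; 1 - \mu_{LS} \inf_{\bm{u} \in \mathcal{C}_{s}(\x_{*}) \cap \mathbb{S}^{n-1}} \| \A \bm{u} \|_2^2,
\end{equation*}
which upon plugging in $\mu_{LS} = 1/\| \A^T \A \|$ is exactly the claimed $P_{LS}(\mathcal{C}_{s}(\x_{*}))$.

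There is no real obstacle here; the only thing to be careful about is the PSD-ness of $\M$, which is precisely what makes the choice $\mu_{LS} = \|\nabla^2 \ell_{LS}\|^{-1}$ convenient (a larger step-size would make $\M$ indefinite and break the Cauchy--Schwarz reduction, forcing one to keep absolute values and $\|\M\|$ instead of its positive part). I expect the analogous Proposition for BP (Proposition for $\ell_{BP}$) to follow the same template with $\W=\A^\dagger$ and $\mu_{BP}=1$, in which case $\I_n - \A^\dagger \A = \Q_{A}$ is itself an orthogonal projector, which is where the advantage over LS becomes visible.
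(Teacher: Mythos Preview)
Your proof is correct and follows essentially the same approach as the paper: substitute $\W=\A^T$ and $\mu_{LS}=\|\A^T\A\|^{-1}$, observe that $\I_n-\mu_{LS}\A^T\A$ is PSD, and apply the (generalized) Cauchy--Schwarz inequality to collapse the bilinear supremum to a quadratic one. Your write-up is actually slightly more careful than the paper's, as you explicitly justify the passage from $\mathbb{B}^n$ to $\mathbb{S}^{n-1}$ via the cone property, which the paper leaves implicit.
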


Various works \cite{chandrasekaran2012convex, plan2012robust, amelunxen2014living, genzel2017ell} 
have proved, via Gordon's lemma (Corollary 1.2 in \cite{gordon1988milman})  and the notion of {\em Gaussian width}, that if: 1) the entries of $\A \in \mathbb{R}^{m \times n}$ are i.i.d Gaussians $\mathcal{N}(0,\frac{1}{m})$;  
2) $\x_*$ belongs to a parsimonious signal model (e.g., a sparse signal); and 3) $s(\cdot)$ is an appropriate prior for the signal model (e.g.,~$\ell_0$-quasi-norm or $\ell_1$-norm for sparse signals), then there exist tight lower bounds\footnote{The tightness of these bounds has been shown empirically.} on the restricted smallest eigenvalue of $\A^T\A$: $\infim{ \bm{u} \in \mathcal{C}_{s}(\x_{*}) \cap \mathbb{S}^{n-1} } \|\A \bm{u} \|_2^2$, which are much greater than the naive lower bound $\sigma_{min}(\A^T\A) \| \bm{u} \|_2^2=0$ (recall that $m<n$, so $\sigma_{min}(\A^T\A)=0$). 
This implies that $\kappa_s P_{LS}(\mathcal{C}_{s}(\x_{*}))<1$ and therefore Theorem~\ref{theorem1} indeed provides meaningful guarantees for PGD applied on LS objective under the above conditions.

\begin{proposition}
\label{prop_bp}
Consider the BP objective $\ell_{BP}({\x})$ and step-size $\mu_{BP} \triangleq  \| \nabla^2 \ell_{BP} \|^{-1}$. We have
\begin{align}
\label{Eq_pgd_rate_bp}
\rho(\mathcal{C}_{s}(\x_{*})) & \leq 
1 - \infim{ \bm{u} \in \mathcal{C}_{s}(\x_{*}) \cap \mathbb{S}^{n-1} } \| (\A\A^T)^{-\frac{1}{2}}\A \bm{u} \|_2^2 \nonumber \\
& \triangleq P_{BP}(\mathcal{C}_{s}(\x_{*})).
\end{align}
\end{proposition}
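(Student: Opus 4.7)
The plan is to exploit the particularly nice structure of the linear map $\I_n - \mu_{BP}\W\A$ in the BP case, where it reduces to an orthogonal projection. First, I would substitute $\W = \A^\dagger$ and compute the step-size. Since $\P_A = \A^\dagger\A$ is a non-trivial orthogonal projection onto the row space of $\A$, its spectral norm equals $1$, and thus $\mu_{BP} = \|\nabla^2\ell_{BP}\|^{-1} = \|\A^\dagger\A\|^{-1} = 1$ (this is already noted in Section~\ref{sec:toy}). It follows that $\I_n - \mu_{BP}\W\A = \I_n - \P_A = \Q_A$, the orthogonal projector onto the null space of $\A$.

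Second, I would use that $\Q_A$ is symmetric and idempotent to bound the bilinear form in the definition of $\rho(\mathcal{C}_s(\x_*))$ by a quadratic one. Writing $\Q_A = \Q_A^T\Q_A$, Cauchy--Schwarz yields
\begin{equation*}
\bm{u}^T\Q_A\bm{v} = \langle \Q_A\bm{u},\Q_A\bm{v}\rangle \leq \|\Q_A\bm{u}\|_2\,\|\Q_A\bm{v}\|_2,
\end{equation*}
so that taking suprema separately over $\bm{u}$ and $\bm{v}$ in $\mathcal{C}_s(\x_*)\cap\mathbb{B}^n$ gives
\begin{equation*}
\rho(\mathcal{C}_s(\x_*)) \leq \sup_{\bm{u}\in\mathcal{C}_s(\x_*)\cap\mathbb{B}^n}\|\Q_A\bm{u}\|_2^2 = \sup_{\bm{u}\in\mathcal{C}_s(\x_*)\cap\mathbb{B}^n}\bm{u}^T\Q_A\bm{u}.
\end{equation*}
The second equality is just $\|\Q_A\bm{u}\|_2^2 = \bm{u}^T\Q_A^T\Q_A\bm{u} = \bm{u}^T\Q_A\bm{u}$.

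Third, since $\bm{u}\mapsto\bm{u}^T\Q_A\bm{u}$ is nonnegative and $2$-homogeneous, the supremum is attained on the unit sphere, and using $\Q_A = \I_n - \P_A$ together with $\|\bm{u}\|_2 = 1$ gives
\begin{equation*}
\rho(\mathcal{C}_s(\x_*)) \leq 1 - \inf_{\bm{u}\in\mathcal{C}_s(\x_*)\cap\mathbb{S}^{n-1}}\bm{u}^T\P_A\bm{u}.
\end{equation*}
Finally, I would rewrite $\bm{u}^T\P_A\bm{u} = \bm{u}^T\A^T(\A\A^T)^{-1}\A\bm{u} = \|(\A\A^T)^{-1/2}\A\bm{u}\|_2^2$, which is exactly the quantity subtracted in $P_{BP}(\mathcal{C}_s(\x_*))$, completing the argument.

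The proof is almost entirely a routine computation, with no serious obstacle: the only minor subtlety is the passage from the bilinear sup (over pairs $\bm{u},\bm{v}$) to a quadratic one, which goes through only because the relevant matrix is symmetric and PSD (being an orthogonal projection). This is precisely the structural feature that is unavailable in the LS case, where $\I_n - \mu_{LS}\A^T\A$ is symmetric but generically only bounds its bilinear form by its operator norm, forcing a weaker estimate of the form obtained in Proposition~\ref{prop_ls}.
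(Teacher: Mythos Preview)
Your proof is correct and essentially identical to the paper's: the paper applies the generalized Cauchy--Schwarz inequality for PSD matrices to $\I_n-\mu_{BP}\A^\dagger\A$, which is exactly your idempotency-plus-standard-CS argument specialized to the projector $\Q_A$. One small inaccuracy in your closing remark: the same Cauchy--Schwarz reduction \emph{is} available in the LS case, since $\I_n-\mu_{LS}\A^T\A$ is also PSD (this is precisely how the paper proves Proposition~\ref{prop_ls}); the difference between $P_{LS}$ and $P_{BP}$ comes not from a weaker bounding technique but from the different resulting infimum expressions.
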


As will be shown in Proposition~\ref{prop1} below, if $P_{LS}(\mathcal{C}_{s}(\x_{*}))<1$ then $P_{BP}(\mathcal{C}_{s}(\x_{*}))<1$ as well. Therefore, Theorem~\ref{theorem1} provides meaningful guarantees also for PGD applied on BP objective. 
However, obtaining tight lower bounds directly on the restricted smallest eigenvalue $\infim{ \bm{u} \in \mathcal{C}_{s}(\x_{*}) \cap \mathbb{S}^{n-1} } \|(\A\A^T)^{-\frac{1}{2}}\A \bm{u} \|_2^2$, similar to those obtained (in some cases) for $\infim{ \bm{u} \in \mathcal{C}_{s}(\x_{*}) \cap \mathbb{S}^{n-1} } \|\A \bm{u} \|_2^2$, appears to be an open problem.  
Its difficulty stems from the fact that 
tools like Slepian's lemma and Sudakov-Fernique inequality, which are the core of Gordon's lemma that is used to bound $\infim{ \bm{u} \in \mathcal{C}_{s}(\x_{*}) \cap \mathbb{S}^{n-1} } \|\A \bm{u} \|_2^2$, cannot be used in this case.

Denote by $\x_{*}^{LS}$ and $\x_{*}^{BP}$ the recoveries obtained by LS and BP objectives, respectively.
The terms $P_{LS}(\mathcal{C}_{s}(\x_{*}^{LS}))$ and $P_{BP}(\mathcal{C}_{s}(\x_{*}^{BP}))$ 
upper bound the convergence rate $\rho$ for each objective.
Observing these expressions, we identify two factors that affect their relation, and are thus possible sources for different convergence rates. The two factors, labeled as ``intrinsic'' and ``extrinsic'', are explained in Sections \ref{sec:convergence_intrin} and \ref{sec:convergence_ext}, respectively.

\subsection{Intrinsic Source of Faster Convergence for BP}
\label{sec:convergence_intrin}

\tomt{
Consider the case where the obtained minimizers are similar, i.e.,  $\x_{*}^{LS} \approx \x_{*}^{BP}$. 
The following proposition guarantees that $P_{BP}(\mathcal{C}_{s}(\x_{*}))$ is lower than $P_{LS}(\mathcal{C}_{s}(\x_{*}))$ for {\em any} full row-rank $\A$, 
which is an inherent advantage of the BP term. 
We believe that this advantage of the convergence rate of BP holds 
also when the two stationary points, $\x_{*}^{LS}$ and $\x_{*}^{BP}$, are not identical but rather similar or share similar geometry for their associated cones $\mathcal{C}_{s}$.}

\begin{proposition}
\label{prop1}
Consider the definitions in \eqref{Eq_pgd_rate_ls} and \eqref{Eq_pgd_rate_bp}.
We have 
$P_{BP}(\mathcal{C}_{s}(\x_{*})) \leq P_{LS}(\mathcal{C}_{s}(\x_{*}))$.
\end{proposition}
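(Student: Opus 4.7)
The plan is to show the pointwise inequality
$\|(\A\A^T)^{-\frac{1}{2}}\A \bm{u}\|_2^2 \geq \frac{1}{\|\A^T\A\|} \|\A \bm{u}\|_2^2$ for every $\bm{u} \in \mathbb{R}^n$, and then take infimum over $\mathcal{C}_{s}(\x_{*}) \cap \mathbb{S}^{n-1}$ on both sides. Subtracting from $1$ then immediately yields $P_{BP}(\mathcal{C}_{s}(\x_{*})) \leq P_{LS}(\mathcal{C}_{s}(\x_{*}))$ by the definitions in \eqref{Eq_pgd_rate_ls} and \eqref{Eq_pgd_rate_bp}.

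The main identity I would use is
\[
\|(\A\A^T)^{-\frac{1}{2}}\A \bm{u}\|_2^2 \;=\; \bm{u}^T \A^T(\A\A^T)^{-1}\A\, \bm{u} \;=\; \bm{u}^T \P_{A} \bm{u} \;=\; \|\P_{A}\bm{u}\|_2^2,
\]
which recasts the BP quadratic form as the squared norm of the projection of $\bm{u}$ onto the row space of $\A$. First I would decompose $\bm{u} = \P_{A}\bm{u} + \Q_{A}\bm{u}$ and use $\A \Q_{A} = \0$ to get $\A\bm{u} = \A \P_{A}\bm{u}$, so that $\|\A\bm{u}\|_2^2 = \|\A \P_{A}\bm{u}\|_2^2$. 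Since $\P_{A}\bm{u}$ lies in the row space of $\A$, on which $\A^T\A$ has maximum eigenvalue $\sigma_{max}(\A\A^T)=\|\A^T\A\|$, the Rayleigh-quotient bound gives
\[
\|\A \P_{A}\bm{u}\|_2^2 \;=\; (\P_{A}\bm{u})^T \A^T\A (\P_{A}\bm{u}) \;\leq\; \|\A^T\A\| \, \|\P_{A}\bm{u}\|_2^2.
\]
Combining the two displays yields the pointwise inequality $\|\A\bm{u}\|_2^2 \leq \|\A^T\A\| \, \|(\A\A^T)^{-\frac{1}{2}}\A \bm{u}\|_2^2$, which is precisely what is needed.

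There is no real obstacle here — the argument is essentially a one-line spectral bound once one recognizes $\A^T(\A\A^T)^{-1}\A$ as the projector $\P_{A}$. The only subtlety worth noting is that the infimum on the left of \eqref{Eq_pgd_rate_bp} and on the right of \eqref{Eq_pgd_rate_ls} are taken over the same set $\mathcal{C}_{s}(\x_{*}) \cap \mathbb{S}^{n-1}$, so the pointwise inequality transfers directly under the infimum without any additional conditions on the prior $s$ beyond those already stated.
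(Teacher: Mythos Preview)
Your proof is correct and follows essentially the same approach as the paper: both establish the pointwise inequality $\|(\A\A^T)^{-1/2}\A\bm{u}\|_2^2 \geq \frac{1}{\|\A^T\A\|}\|\A\bm{u}\|_2^2$ via a Rayleigh-quotient/spectral bound and then pass to the infimum over $\mathcal{C}_{s}(\x_{*})\cap\mathbb{S}^{n-1}$. The only cosmetic difference is that the paper bounds $(\A\bm{u})^T(\A\A^T)^{-1}(\A\bm{u})$ from below by $\sigma_{min}((\A\A^T)^{-1})\|\A\bm{u}\|_2^2$ directly, whereas you first rewrite the BP quadratic form as $\|\P_A\bm{u}\|_2^2$ and then bound $\|\A\P_A\bm{u}\|_2^2$ from above --- these are dual formulations of the same spectral estimate.
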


\begin{proof}
\begin{align}
P_{BP}(\mathcal{C}_{s}(\x_{*})) &= 1 - \infim{ \bm{u} \in \mathcal{C}_{s}(\x_{*}) \cap \mathbb{S}^{n-1} } \| (\A\A^T)^{-\frac{1}{2}}\A \bm{u} \|_2^2  \nonumber \\
&\leq 1 - \sigma_{min}( (\A\A^T)^{-1} ) \infim{ \bm{u} \in \mathcal{C}_{s}(\x_{*}) \cap \mathbb{S}^{n-1} } \|\A \bm{u} \|_2^2  \nonumber \\
&= 1 - \frac{1}{\|\A^T\A\|} \infim{ \bm{u} \in \mathcal{C}_{s}(\x_{*}) \cap \mathbb{S}^{n-1} } \|\A \bm{u} \|_2^2  
= P_{LS}(\mathcal{C}_{s}(\x_{*})) 
\end{align}
\end{proof}

Notice that in 
the last proof 
we use an inequality that does not take into account the fact that $\u$ resides in a restricted set. 
As discussed above, this is due to the lack of tighter lower bounds for $\infim{ \bm{u} \in \mathcal{C}_{s}(\x_{*}) \cap \mathbb{S}^{n-1} } \|(\A\A^T)^{-\frac{1}{2}}\A \bm{u} \|_2^2$.
Still, following the warm-up example\footnote{Note that the general analysis subsumes the warm-up result: {\em strict} inequality for the convergence rates. 
For the prior in \eqref{Eq_oracle_prior} we have that the descent set (and its tangent cone) are the subspace spanned by the rows of $\A$. Therefore, we have that $P_{LS}=1- \frac{\sigma_{min}(\A\A^T)}{{\|\A^T\A\|}}$ and $P_{BP}=1-\|\P_A\|=0$.} and the discussions below Propositions \ref{prop_ls} and \ref{prop_bp}, we conjecture 
that the inequality in Proposition \ref{prop1} is strict, i.e., that $P_{BP}(\mathcal{C}_{s}(\x_{*})) < P_{LS}(\mathcal{C}_{s}(\x_{*}))$, in generic cases when the entries of  
$\A \in \mathbb{R}^{m \times n}$ are i.i.d Gaussians $\mathcal{N}(0,\frac{1}{m})$, the recovered signals belong to parsimonious models and feasible sets are appropriately chosen.
In Appendix~\ref{app:conjecture} we present 
experiments that support our conjecture.

\textbf{Remark.} 
\tomt{
As typically done in the optimization literature (e.g., see \cite{beck2009fast,beck2017first,oymak2017sharp}), we have 
\tomt{mathematically} examined {\em upper bounds}  
on the convergence rates \tomt{of the optimization algorithm (PGD in our case)}. 
Since we wish to compare the \tomt{practical} convergence rates of PGD for LS and BP, 
a natural question is:  Should the bounds be tight in order to deduce 
\tomt{theoretically backed} 
conclusions on the relation of the {\em real} rates for LS and BP 
\tomt{(i.e., which one is faster)}?
Interestingly, when both objectives lead to a similar stationary point $\x_{*}$, 
it is {\em enough} to verify that $P_{LS}(\mathcal{C}_{s}(\x_{*}))$ is tight \tomt{in order} to conclude that the real rate for BP is better than for LS.} 
This follows from the fact that the real rate of BP is smaller (i.e., better) than $P_{BP}(\mathcal{C}_{s}(\x_{*}))$, and 
that 
$P_{BP}(\mathcal{C}_{s}(\x_{*})) \leq P_{LS}(\mathcal{C}_{s}(\x_{*}))$. 
Thus, 
tightness in $P_{LS}$ is important for this conclusion (and is indeed obtained in certain cases, as discussed above and empirically demonstrated in \cite{oymak2017sharp}), 
while 
``miss-tightness" in $P_{BP}$ only increases the gap between the {\em real} rates of LS and BP in favor of BP.

\subsection{Extrinsic Source of Different Convergence Rates}
\label{sec:convergence_ext}

Since using LS and BP objectives in \eqref{Eq_cost_func_general2} defines two different optimization problems, potentially, one may prefer to assign different values for the regularization parameter $R$ in each case. This is obviously translated to using feasible sets with different volume. 
Note that the obtained convergence rates depend on the feasible set through $\mathcal{D}_{s}(\x_{*})$ and $\mathcal{C}_{s}(\x_{*})$, and are therefore affected by the value of $R$. 
We refer to this effect on the convergence rate as ``extrinsic" because it originates in a modified prior rather than directly from the different BP and LS objectives.

For the LS objective, under the assumption of Gaussian $\A$, the work 
\cite{oymak2017sharp} has used the notion of Gaussian width to theoretically link 
the complexity of the signal prior, which translates to the feasible set in \eqref{Eq_cost_func_general2}, 
and the convergence rate of PGD.
Their result implies that {\em increasing} the size of the feasible set (due to a relaxed prior) is expected to {\em decrease} the convergence rate, i.e., {\em slow down} PGD.  
Therefore, it is expected that using $R_{BP}<R_{LS}$ would increase the gap between the convergence rates in favor of the BP term, beyond the effect of its intrinsic advantage described in Section \ref{sec:convergence_intrin}.
On the other hand, 
using $R_{BP}>R_{LS}$ may counteract the intrinsic advantage of BP.

\section{Convergence Analysis Beyond PGD}
\label{sec:convergence_beyond}

Many works on inverse problems use the penalized optimization problem \eqref{Eq_cost_func_general} rather than the constrained one \eqref{Eq_cost_func_general2}. 
Oftentimes \eqref{Eq_cost_func_general} is minimized using the proximal gradient method, which is given by
\begin{align}
\label{Eq_ista}
{\x}_{t+1} =  \mathrm{prox}_{\mu \beta s(\cdot)}({\x}_{t} - \mu \nabla  \ell({\x}_{t})),
\end{align}
where 
\begin{align}
\label{def_prox}
\mathrm{prox}_{s(\cdot)}({\z}) \triangleq \argmin{{\x}} \,\, \frac{1}{2} \| {\z} - {\x} \|_2^2 + s({\x})
\end{align}
is the proximal mapping $s(\cdot)$ at the point ${\z}$, which was introduced for convex functions in \cite{moreau1965proximite}.
Note that PGD with a convex feasible set is essentially the proximal gradient method for $s(\cdot)$ which is a convex indicator, 
and similarly to PGD,
setting the step-size $\mu$ to 1 over the Lipschitz constant of $\nabla  \ell(\cdot)$ ensures sublinear convergence of \eqref{Eq_ista}
in convex settings \cite{beck2009fast}.

Note that the proximal mapping of any convex $\beta s(\cdot)$ is non-expansive (see, e.g., \cite{beck2017first}), i.e., for all ${\z}_1, {\z}_2$
\begin{align}
\label{Eq_non_expansive}
\| \mathrm{prox}_{\beta s(\cdot)}({\z}_1) - \mathrm{prox}_{\beta s(\cdot)}({\z}_2) \|_2 \leq \| {\z}_1 - {\z}_2 \|_2. 
\end{align}
However,
this property is not enough to obtain an expression that allows to distinguish between the convergence rates of LS and BP (as done using \eqref{Eq_pgd_thm} for PGD),
because it does not express the effect of the prior on the null space of $\A$.

To obtain an expression that allows to compare the convergence rates of \eqref{Eq_ista} with $\ell_{LS}({\x})$ and $\ell_{BP}({\x})$, we make a relaxed contraction assumption.
Namely, we require that the proximal mapping of $\beta s(\cdot)$ is a contraction (only) in the null space of $\A$ (rather than in all $\mathbb{R}^n$).

\begin{condition}
\label{cond3}
Given the convex function $\beta s(\cdot)$ and the full row-rank matrix $\A$, there exists $0< \tomt{\delta_{\A,\beta s(\cdot)}} \leq 1$ such that for all ${\z}_1, {\z}_2$ 
\begin{align}
\label{Eq_contraction}
&\| \mathrm{prox}_{\beta s(\cdot)}({\z}_1) - \mathrm{prox}_{\beta s(\cdot)}({\z}_2) \|_2 \leq  
\left \| \left (\P_A + (1-\tomt{\delta_{\A,\beta s(\cdot)}})\Q_A \right ) ( {\z}_1 - {\z}_2 ) \right \|_2. 
\end{align}
\end{condition}

The quantity 
$\delta_{\A,\beta s(\cdot)}$ in Condition~\ref{cond3} reflects the restriction that the prior $\beta s(\cdot)$ imposes on the null space of $\A$. For the restrictive prior \eqref{Eq_oracle_prior}, given in the warm-up Section~\ref{sec:toy}, it is easy to see that $\mathrm{prox}_{\beta s_{oracle}(\cdot)}({\z}) = \P_{A}{\z} + \Q_A\x_{gt}$, which implies $\delta_{\A,\beta s_{oracle}(\cdot)}=1$.
On the other hand, the general property in \eqref{Eq_non_expansive} is obtained for $\delta_{\A,\beta s(\cdot)}=0$ (because $\P_{A}+\Q_{A} = \I_n$). 
Condition~\ref{cond3} is weaker than requiring that 
$\mathrm{prox}_{\beta s(\cdot)}(\cdot)$
is a contraction in all $\mathbb{R}^n$. 
Thus, 
it holds for priors that satisfy the latter \cite{tikhonov1963solution, teodoro2018convergent}. See Appendix~\ref{app:Contraction} for more details on this condition.

The following theorem shows that if Condition~\ref{cond3} holds \tomt{(namely, the prior imposes restrictions on the null space of $\A$)}, then the iterates \eqref{Eq_ista} with step-size of $\| \nabla^2 \ell \|^{-1}$ exhibit a {\em linear} convergence under 
conditions that are satisfied by both $\ell_{LS}({\x})$ and $\ell_{BP}({\x})$. 
The proof appears in Appendix~\ref{app:proof_ista}.

\begin{theorem}
\label{theorem_ista}
Let $s : \mathbb{R}^n \rightarrow \mathbb{R}$ be a convex function and let $\ell : \mathbb{R}^n \rightarrow \mathbb{R}$ be a twice differentiable convex function that satisfies $\nabla\ell(\cdot) \in \mathrm{range}(\A^T)$ for a given full row-rank matrix $\A$.
Denote by $\tilde{\sigma}_{max}$ the largest eigenvalue of $\nabla^2 \ell$ and by $\tilde{\sigma}_{min}$ the smallest {\em non-zero} eigenvalue of $\nabla^2 \ell$.
Then, if Condition~\ref{cond3} holds for $\mu\beta s(\cdot)$ and $\A$, we have that the sequence $\{{\x}_t\}$ obtained by \eqref{Eq_ista} with $\mu=1/\tilde{\sigma}_{max}$ obeys
\begin{align}
\label{Eq_ista_thm}
&\|{\x}_{t+1}-\x_*\|_2 \leq  
\mathrm{max} \left \{ 1-\frac{\tilde{\sigma}_{min}}{\tilde{\sigma}_{max}} , 1 - \delta_{\A,\frac{\beta}{\tilde{\sigma}_{max}}s(\cdot)}  \right \} \|{\x}_{t}-\x_*\|_2,
\end{align}
where $\x_*$ is a minimizer of \eqref{Eq_cost_func_general}.
\end{theorem}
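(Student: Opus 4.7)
The plan is to exploit the fixed-point characterization of the minimizer. Since $\x_*$ minimizes \eqref{Eq_cost_func_general}, the first-order optimality condition implies that $\x_* = \mathrm{prox}_{\mu\beta s(\cdot)}(\x_* - \mu\nabla\ell(\x_*))$ for any $\mu>0$. Setting $\z_t \triangleq \x_t - \mu\nabla\ell(\x_t)$ and $\z_* \triangleq \x_* - \mu\nabla\ell(\x_*)$, the iteration \eqref{Eq_ista} becomes $\x_{t+1}-\x_* = \mathrm{prox}_{\mu\beta s(\cdot)}(\z_t) - \mathrm{prox}_{\mu\beta s(\cdot)}(\z_*)$. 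With $\mu = 1/\tilde{\sigma}_{max}$ we have $\mu\beta s(\cdot) = (\beta/\tilde{\sigma}_{max})s(\cdot)$, so Condition~\ref{cond3} applied to this pair (with $\delta \triangleq \delta_{\A,(\beta/\tilde{\sigma}_{max})s(\cdot)}$) yields
\begin{align*}
\|\x_{t+1}-\x_*\|_2 \leq \left\| \left(\P_A + (1-\delta)\Q_A\right)(\z_t - \z_*) \right\|_2.
\end{align*}

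The second step is to decompose the argument into its row-space and null-space components with respect to $\A$. Writing $\z_t - \z_* = (\x_t - \x_*) - \mu(\nabla\ell(\x_t) - \nabla\ell(\x_*))$, the hypothesis $\nabla\ell(\cdot) \in \mathrm{range}(\A^T)$ guarantees that the gradient difference lies entirely in the row space of $\A$, so $\Q_A(\z_t - \z_*) = \Q_A(\x_t - \x_*)$ and $\P_A(\z_t - \z_*) = \P_A(\x_t - \x_*) - \mu(\nabla\ell(\x_t) - \nabla\ell(\x_*))$. Since the ranges of $\P_A$ and $\Q_A$ are orthogonal, the squared norm on the right-hand side splits as $\|\P_A(\z_t-\z_*)\|_2^2 + (1-\delta)^2\|\Q_A(\x_t-\x_*)\|_2^2$.

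For the row-space term, I exploit the fact that the theorem's phrasing (eigenvalues of $\nabla^2\ell$, rather than a Lipschitz constant) treats $\nabla^2\ell$ as a constant matrix, which covers both $\ell_{LS}$ and $\ell_{BP}$. Then $\nabla\ell(\x_t) - \nabla\ell(\x_*) = \nabla^2\ell\,(\x_t-\x_*)$, and because $\nabla\ell \in \mathrm{range}(\A^T)$ forces $\mathrm{range}(\nabla^2\ell) \subseteq \mathrm{range}(\A^T)$, equivalently $\nabla^2\ell\,\Q_A = 0$, one obtains $\P_A(\z_t-\z_*) = (\I_n - \mu\nabla^2\ell)\P_A(\x_t-\x_*)$. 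With $\mu = 1/\tilde{\sigma}_{max}$ the operator $\I_n - \mu\nabla^2\ell$ is positive semi-definite, and its restriction to $\mathrm{range}(\A^T)$ has spectral norm at most $1 - \tilde{\sigma}_{min}/\tilde{\sigma}_{max}$, so $\|\P_A(\z_t-\z_*)\|_2 \leq (1 - \tilde{\sigma}_{min}/\tilde{\sigma}_{max})\|\P_A(\x_t-\x_*)\|_2$.

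Combining the two component bounds gives
\begin{align*}
\|\x_{t+1}-\x_*\|_2^2 \leq \left(1-\tfrac{\tilde{\sigma}_{min}}{\tilde{\sigma}_{max}}\right)^2 \|\P_A(\x_t-\x_*)\|_2^2 + (1-\delta)^2 \|\Q_A(\x_t-\x_*)\|_2^2,
\end{align*}
which is at most $\max\{1-\tilde{\sigma}_{min}/\tilde{\sigma}_{max},\,1-\delta\}^2$ times $\|\P_A(\x_t-\x_*)\|_2^2 + \|\Q_A(\x_t-\x_*)\|_2^2 = \|\x_t-\x_*\|_2^2$ by the Pythagorean identity; taking square roots yields \eqref{Eq_ista_thm}. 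The main subtlety I anticipate is the eigenvalue step: it requires that $\mathrm{range}(\A^T)$ lie in the span of eigenvectors of $\nabla^2\ell$ associated with eigenvalues in $[\tilde{\sigma}_{min}, \tilde{\sigma}_{max}]$, i.e., $\mathrm{null}(\nabla^2\ell) = \mathrm{null}(\A)$. This is automatic for both $\nabla^2\ell_{LS} = \A^T\A$ and $\nabla^2\ell_{BP} = \A^\dagger\A = \P_A$, and together with Condition~\ref{cond3} is what enables linear convergence without global strong convexity.
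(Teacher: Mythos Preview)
Your proof is correct and follows essentially the same route as the paper's: both start from the fixed-point characterization of $\x_*$, apply Condition~\ref{cond3}, use $\nabla\ell(\cdot)\in\mathrm{range}(\A^T)$ to kill the gradient contribution in the null-space component, and finish with a spectral bound on $\I_n-\mu\nabla^2\ell$ restricted to $\mathrm{range}(\A^T)$. The only cosmetic difference is that the paper packages the two components into a single map $\g(\x)=(\P_A+(1-\delta)\Q_A)\x-\mu\nabla\ell(\x)$ and bounds $\|\nabla\g\|$ directly (via a mean-value argument), whereas you carry out the orthogonal $\P_A/\Q_A$ split explicitly and recombine with the Pythagorean identity; the eigenvalue subtlety you flag ($\mathrm{null}(\nabla^2\ell)=\mathrm{null}(\A)$) is equally implicit in the paper's computation of $\|\nabla\g\|$.
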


For LS we have that $\nabla \ell_{LS}({\x}) = - \A^T ( \y - \A {\x} )$ and $\nabla^2 \ell_{LS}({\x}) = \A^T \A$. Therefore, $\tilde{\sigma}_{max} = \sigma_{max}(\A\A^T)$ and $\tilde{\sigma}_{min} = \sigma_{min}(\A\A^T)$, and Theorem~\ref{theorem_ista} implies
\begin{equation}
\label{Eq_ista_thm_ls}
\frac{\|{\x}_{t+1}^{LS}-\x_*^{LS}\|_2}{\|{\x}_{t}^{LS}-\x_*^{LS}\|_2} \leq \mathrm{max} \left \{ 1- {\displaystyle   \frac{\sigma_{min}(\A\A^T)}{\sigma_{max}(\A\A^T)}} , 1 - \tilde{\delta}_{LS}  \right \},
\end{equation}
where $\tilde{\delta}_{LS} \triangleq \delta_{\A,\frac{\beta_{LS}}{\sigma_{max}(\A\A^T)}s(\cdot)}$.

For BP we have that $\nabla \ell_{BP}({\x}) = - \A^\dagger ( \y - \A {\x} )$ and $\nabla^2 \ell_{BP}({\x}) = \A^\dagger \A$. Therefore, $\tilde{\sigma}_{max} = 1$ and $\tilde{\sigma}_{min} = 1$, and Theorem~\ref{theorem_ista} implies
\begin{equation}
\label{Eq_ista_thm_bp}
\frac{\|{\x}_{t+1}^{BP}-\x_*^{BP}\|_2}{\|{\x}_{t}^{BP}-\x_*^{BP}\|_2} \leq  1 - \tilde{\delta}_{BP},
\end{equation}
where $\tilde{\delta}_{BP} \triangleq \delta_{\A,\beta_{BP}s(\cdot)}$.

Comparing \eqref{Eq_ista_thm_ls} and \eqref{Eq_ista_thm_bp}, it can be seen that if Condition~\ref{cond3} holds then there is an advantage for the BP term over the LS term, which is due to a better ``restricted condition number" of the Hessian of $\ell_{BP}$ in the row space of $\A$. 
Specifically, 
note that if $\tilde{\delta}_{LS}<\tilde{\delta}_{BP}$ then the bound on the rate of BP is better, {\em regardless} of $\frac{\sigma_{min}(\A\A^T)}{\sigma_{max}(\A\A^T)}$.
Alternatively, the results hint 
that a worse condition number of $\A\A^T$ is expected to correlate with a larger difference between the convergence rates of LS and BP {\em in favor of} BP.
Since PGD with a convex feasible set is a special case of the proximal gradient method,  
\tomt{
these results apply also to PGD. 
Indeed, such a behavior is demonstrated in our experiments for compressed sensing tasks with $\ell_1$-norm prior (see Fig.~\ref{fig:CS_results_different_ratios} in the sequel), despite the fact that Condition~\ref{cond3} (which is difficult to be verified in general) may not hold in that case. This shows the practical implication of our theoretical result beyond the strict settings required to prove the theorem.}

In this paper we mainly focus on direct PGD results (rather than on those obtained for general proximal methods) for two reasons.
Firstly, they do not require a contractive assumption. 
Secondly, identifying an ``intrinsic factor'' for different convergence rates is easier for PGD both in 
the experiments 
(as discussed on Fig.~\ref{fig:CS_results_fista} in the sequel) 
and the analysis (the dependence of $\tilde{\delta}_{LS},\tilde{\delta}_{BP}$ on $\beta_{LS},\beta_{BP}$ is not explicit and 
cannot be bypassed by assuming $\x_{*}^{LS} \approx \x_{*}^{BP}$, as we have done in Section~\ref{sec:convergence_intrin} 
to identify the inherent advantage of $P_{BP}$ over $P_{LS}$ for PGD).

\section{Experiments}
\label{sec:exp}

In this section, we provide numerical experiments that corroborate our 
analysis for both convex ($\ell_1$-norm) and non-convex  
(DCGAN \cite{radford2015unsupervised}) priors.
\tomt{In Section~\ref{sec:exp_l1}, we consider the $\ell_1$-norm prior and} 
examine the performance of PGD with LS and BP objectives for compressed sensing (CS).  
It is demonstrated that both objectives
prefer (i.e., provide better PSNR\footnote{The PSNR of $\hat{\x}$ with respect to the reference image $\x_{gt} \in [0,255]^n$ is defined as $10\mathrm{log}_{10}\left ( \frac{255^2}{\frac{1}{n}\|\hat{\x}-\x_{gt}\|_2^2} \right )$.} for) a similar value of $R$ --- a case in which the faster convergence for BP is dictated by its ``intrinsic" advantage, rather than by an ``extrinsic" source. 
We also
examine
an accelerated proximal gradient method (FISTA \cite{beck2009fast}) applied on \eqref{Eq_cost_func_general} with LS and BP fidelity terms, and suggest an explanation for the observed behavior using the ``extrinsic" and ``intrinsic" sources.
\tomt{In Section~\ref{sec:exp_l1_linear}, still considering the $\ell_1$-norm prior, we run a few controlled experiments (where the conditions of our theorem hold) that demonstrate the linear convergence of PGD more clearly.} 
\tomt{Finally, in Section~\ref{sec:exp_dcgan}, we turn to consider the DCGAN prior. We}
examine the performance of PGD for compressed sensing (CS) and super-resolution (SR) tasks, and show again the inherent advantage of the BP objective.

\subsection{$\ell_1$-Norm Prior}
\label{sec:exp_l1}

\tomt{We consider the CS task,
where a signal in $\mathbb{R}^n$ needs to be recovered from $m$ compressed and noisy measurements $(m<n)$.
Specifically, we consider a typical setting,} 
where the measurement matrix is Gaussian (with i.i.d.~entries drawn from $\mathcal{N}(0,1/m)$), the compression ratio is $m/n=0.5$, and the signal-to-noise ratio (SNR) is 20dB (with white Gaussian noise).
We use four standard test images: {\em cameraman}, {\em house}, {\em peppers}, and {\em Lena}, in their $128 \times 128$ versions (so $n=128^2$). 
To apply sparsity-based recovery, we represent the images in the Haar wavelet basis, 
i.e., $\A$ is the multiplication of the measurement matrix with the Haar basis.

For the reconstruction, we use the feasible set $\mathcal{K} = \left \{ {\x} \in \mathbb{R}^n : \|{\x}\|_1 \leq R \right \}$, where $\|\cdot\|_1$ is the $\ell_1$-norm, and project on it using the fast algorithm from \cite{duchi2008efficient}. Starting from ${\x}_0=0$, we apply 1000 iterations of PGD on the BP and LS objectives with the typical step-size of 1 over the spectral norm of the objective's Hessian.
We compute $\A^\dagger$ in advance. 
Thus, PGD has {\em similar per-iteration computational cost} for both objectives and the overall complexity is dictated by the number of iterations.

\begin{figure}[t]
  \centering
  \begin{subfigure}[b]{0.5\linewidth}
    \centering\includegraphics[width=200pt]{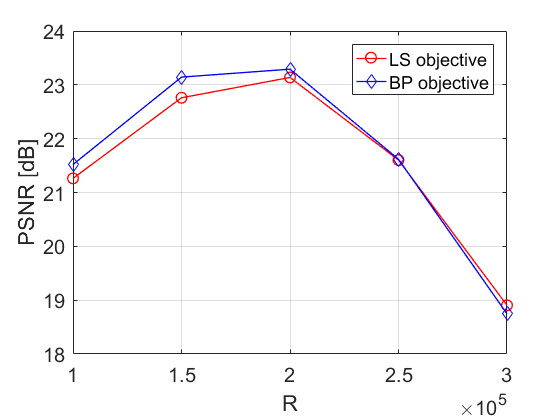}
    \caption{\label{fig:CS_results_psnr_vs_R}}
  \end{subfigure}%
  \begin{subfigure}[b]{0.5\linewidth}
    \centering\includegraphics[width=200pt]{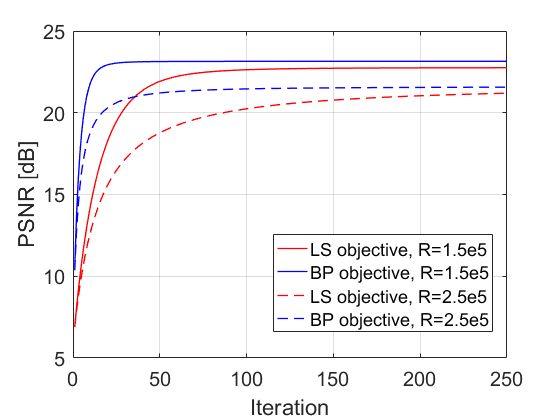}
    \caption{\label{fig:CS_results_psnr_vs_iter}}
  \end{subfigure}%
  \caption{\tomt{PSNR results (averaged over 4 test images) of PGD with $\ell_1$ prior for the compressed sensing task with $m/n=0.5$ Gaussian measurements and SNR of 20dB.
  (\subref{fig:CS_results_psnr_vs_R}): PSNR vs.~regularization parameter $R$ (after 1K iterations). Observe that both LS and BP prefer similar values of $R$.
  (\subref{fig:CS_results_psnr_vs_iter}): PSNR vs.~iteration number (for $R=1.5\mathrm{e}5$ and $R=2.5\mathrm{e}5$). Observe the inherent convergence advantage of BP when both LS and BP use the same value for $R$.}}
\label{fig:CS_results}

\vspace{1mm}

  \centering
    \centering\includegraphics[width=200pt]{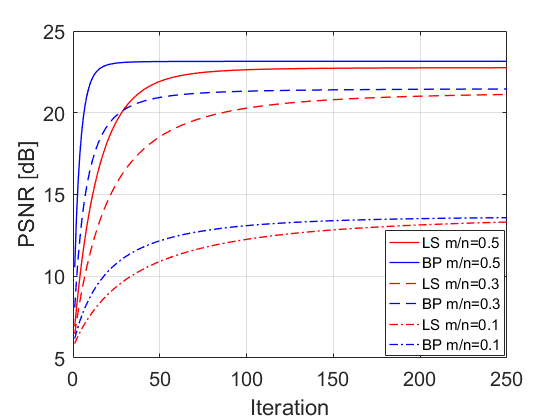}
  \caption{\tomt{PSNR results (averaged over 4 test images) vs.~the iteration number of PGD with $\ell_1$ prior and $R=1.5\mathrm{e}5$ for the compressed sensing task with different $m/n$ ratios  
  and SNR of 20dB.
  Note that 
$\frac{\sigma_{min}(\A\A^T)}{\sigma_{max}(\A\A^T)}$ equals 0.0296, 0.0862 and 0.2721 for $m/n$ ratios of 0.5, 0.3 and 0.1, respectively.
  The convergence advantage of BP over LS is larger when the condition number of $\A\A^T$ is worse.}}    
\label{fig:CS_results_different_ratios}
\vspace{-5mm}
\end{figure}

Fig.~\ref{fig:CS_results_psnr_vs_R} shows the PSNR of the reconstructions, averaged over all images, for different values of the regularization parameter $R$.
Fig.~\ref{fig:CS_results_psnr_vs_iter} shows the average PSNR as a function of the iteration number, for $R=1.5\mathrm{e}5$ and $R=2.5\mathrm{e}5$. Note that $R=2.5\mathrm{e}5$ yields less accurate results despite being the average $\ell_1$-norm of the four ``ground truth" test images (in Haar basis representation). 
\tomt{Importantly, from
Fig.~\ref{fig:CS_results_psnr_vs_iter} we see that when PGD is applied on BP and LS objectives with the same value of $R$, indeed BP is faster, which demonstrates its ``intrinsic" advantage.} 
Also, when $R$ is increased, the convergence of PGD for both objectives becomes slower due to this ``extrinsic" modification.
Note, though, that Fig.~\ref{fig:CS_results_psnr_vs_R} implies that both objectives prefer a similar value of $R$. Therefore, when $R$ is (uniformly) tuned for best PSNR of each method, it is expected that the intrinsic advantage of BP over LS is the reason for its faster PGD convergence.

\begin{figure}[t]
  \centering
  \begin{subfigure}[b]{0.5\linewidth}
    \centering\includegraphics[width=200pt]{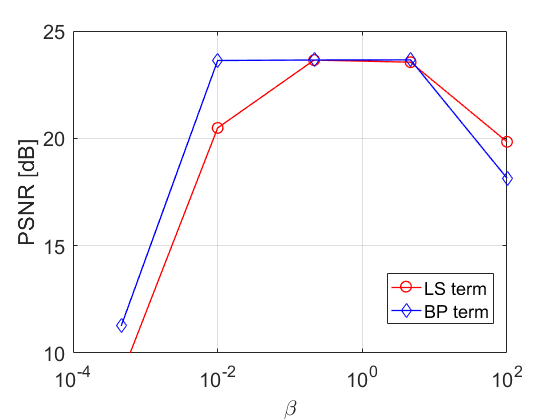}
    \caption{\label{fig:CS_results_psnr_vs_beta_fista}}
  \end{subfigure}%
  \begin{subfigure}[b]{0.5\linewidth}
    \centering\includegraphics[width=200pt]{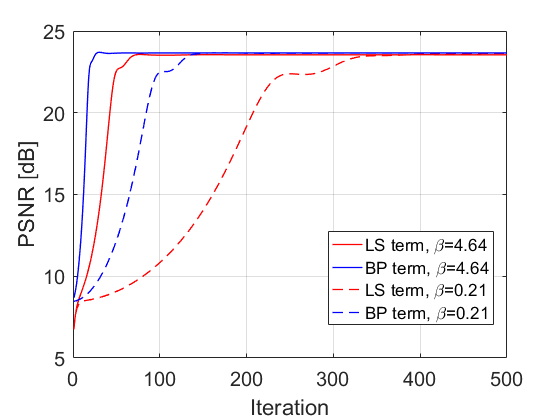}
    \caption{\label{fig:CS_results_psnr_vs_iter_fista}}
  \end{subfigure}%
  \caption{\tomt{PSNR results (averaged over 4 test images) of FISTA with $\ell_1$ prior for the compressed sensing task with $m/n=0.5$ Gaussian measurements and SNR of 20dB.
  (\subref{fig:CS_results_psnr_vs_beta_fista}): PSNR vs.~regularization parameter $\beta$ (after 1K iterations). Observe that similar PSNR values are obtained by different values of $\beta$.
  (\subref{fig:CS_results_psnr_vs_iter_fista}): PSNR vs.~iteration number (for $\beta=0.21, 4.64$). Observe the strong effect of $\beta$ on the convergence rate.}}  
\label{fig:CS_results_fista}

\vspace{1mm}

  \centering
    \centering\includegraphics[width=200pt]{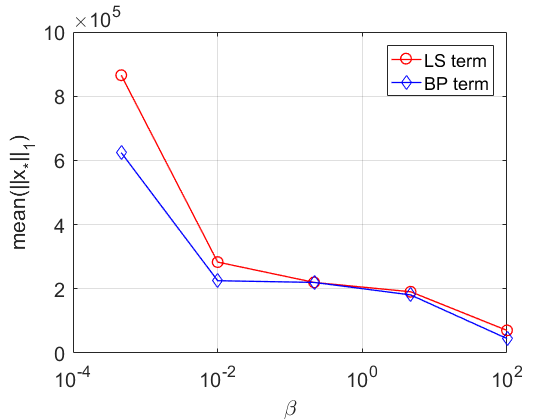}
  \caption{\tomt{Average $\|\x_*\|_1$ of the reconstructions of FISTA from Fig.~\ref{fig:CS_results_psnr_vs_beta_fista} vs.~the regularization parameter $\beta$. Note that similar values of $\|\x_*\|_1$ are obtained by different values of $\beta$.
  }}
\label{fig:CS_results_fista_avgL1}
\vspace{-5mm}
\end{figure}

Next, we examine the convergence rates of PGD with LS and BP objectives for different compression ratios $m/n$ and fixed $R=1.5\mathrm{e}5$ (still, with SNR of 20dB). 
Fig.~\ref{fig:CS_results_different_ratios} shows the average PSNR 
vs.~iteration number, for $m/n=0.5$, $m/n=0.3$ and $m/n=0.1$. 
Note that in these experiments the ratio $\frac{\sigma_{min}(\A\A^T)}{\sigma_{max}(\A\A^T)}$ equals 0.0296, 0.0862 and 0.2721 for $m/n$ ratios of 0.5, 0.3 and 0.1, respectively.
Observing the convergence rates of the different curves in this figure, it is easy to see that the advantage of the rate of BP over the rate of LS increases when the ratio $m/n$ increases, or alternatively when the ratio $\frac{\sigma_{min}(\A\A^T)}{\sigma_{max}(\A\A^T)}$ decreases.

This empirical behavior is inline with the analysis in both Section~\ref{sec:convergence_intrin} and Section~\ref{sec:convergence_beyond}.
Section~\ref{sec:convergence_intrin} characterizes the ratio between the convergence rates of LS and BP by the ratio of the terms $P_{LS}$ and $P_{BP}$. 
An approximation of ${P}_{BP}/{P}_{LS}$ is provided  
in Appendix~\ref{app:conjecture}  
for a similar CS setting. 
It is shown there (in Fig.~\ref{fig:approx_ratio_vs_m}) that the ratio ${P}_{BP}/{P}_{LS}$ decreases (i.e., the advantage of BP increases) when the ratio $m/n$ increases, which indeed agrees with Fig.~\ref{fig:CS_results_different_ratios}.
Section~\ref{sec:convergence_beyond} considers the proximal gradient method, which subsumes PGD, and the results there in \eqref{Eq_ista_thm_ls} and \eqref{Eq_ista_thm_bp} suggest that the convergence rate of BP can be less affected than the  
one of LS by a bad condition number of $\A\A^T$, i.e., low values of $\frac{\sigma_{min}(\A\A^T)}{\sigma_{max}(\A\A^T)}$. Again, this agrees with the results in Fig.~\ref{fig:CS_results_different_ratios}.

We turn now to recover the images by minimizing  
\eqref{Eq_cost_func_general} using 1000 iterations of FISTA \cite{beck2009fast} with LS and BP fidelity terms. 
We consider again the case of $m/n=0.5$. 
Figs.~\ref{fig:CS_results_psnr_vs_beta_fista} and \ref{fig:CS_results_psnr_vs_iter_fista} show the average PSNR vs.~$\beta$ and vs.~iteration number, respectively.
Fig.~\ref{fig:CS_results_fista_avgL1} presents the average $\|\x_*\|_1$ of the recoveries vs.~$\beta$.
Note that the best PSNRs for BP and LS are received by values of $\beta$ for which $\|\x_*\|_1$ is very similar for both terms, i.e., the {\em equivalent} constrained LS and BP formulations have very similar $R$ (as observed for PGD).

However, disentangling the factors for different convergence rates of LS and BP, where for each of them the regularization parameter is (uniformly) tuned for best PSNR, is more complicated for proximal methods than for PGD. 
To see this, note that 
in Figs.~\ref{fig:CS_results_psnr_vs_beta_fista} and \ref{fig:CS_results_fista_avgL1} for each fidelity term similar values of PSNR and $\|\x_*\|_1$ can be obtained for different values of $\beta$.
Yet, as shown in Figs.~\ref{fig:CS_results_psnr_vs_iter_fista}, different values of $\beta$ significantly change the convergence rate of FISTA for the same fidelity term.  
Thus, contrary to our conclusion for PGD, 
here when $\beta$ is uniformly tuned for best PSNR of each fidelity 
term (as in \cite{tirer2019back}), an ``extrinsic source" ($\beta$ setting) can affect the convergence rate as well.

\begin{figure}[t]
  \centering
    \centering\includegraphics[width=200pt]{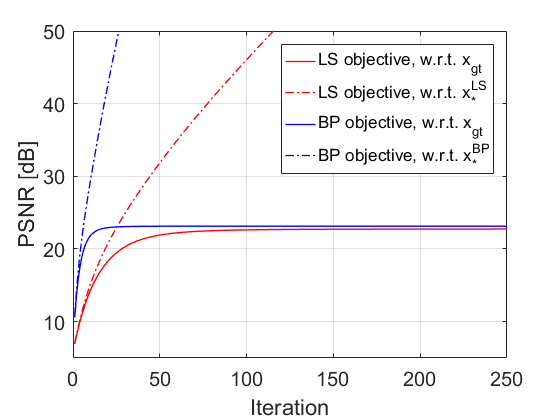}
  \caption{\tomt{PSNR results (w.r.t.~both ground truth and the final stationary point, averaged over 4 test images) vs.~the iteration number of PGD with $\ell_1$ prior and $R=1.5\mathrm{e}5$ for the compressed sensing task with $m/n=0.5$ Gaussian measurements and SNR of 20dB.
  Observe that measuring the PSNR w.r.t.~the final stationary point shows that the convergence is linear (but does not reflect accuracy).}}  
\label{fig:CS_results_psnr_vs_iter_stat_point}


\vspace{1mm}

  \centering
  \begin{subfigure}[b]{0.5\linewidth}
    \centering\includegraphics[width=200pt]{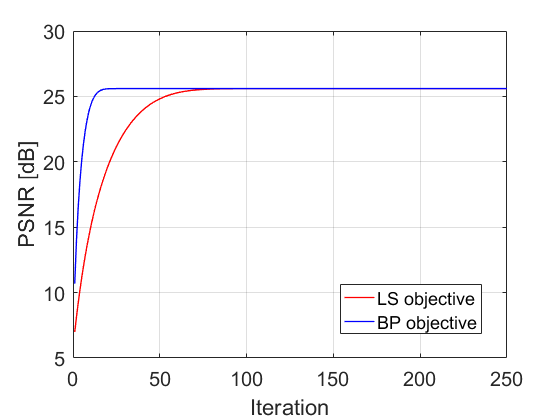}
    \caption{\label{fig:CS_results_psnr_vs_iter_snr20_sparse}}
  \end{subfigure}%
  \begin{subfigure}[b]{0.5\linewidth}
    \centering\includegraphics[width=200pt]{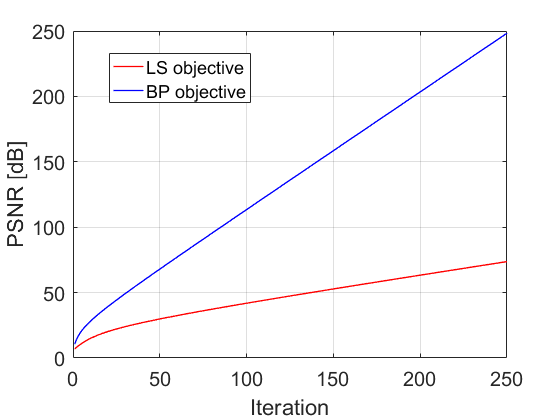}
    \caption{\label{fig:CS_results_psnr_vs_iter_snrInf_sparse}}
  \end{subfigure}%
  \caption{\tomt{PSNR results (averaged over 4 test images) vs.~the iteration number of PGD with $\ell_1$ prior and $R=\|\x_{gt}\|_1$ (per image) for the compressed sensing task with $m/n=0.5$ Gaussian measurements and sparsified test images. 
  (\subref{fig:CS_results_psnr_vs_iter_snr20_sparse}): SNR = 20dB.
  (\subref{fig:CS_results_psnr_vs_iter_snrInf_sparse}): SNR = $\infty$.
  Observe the faster convergence of BP, especially in the noiseless scenario, where linear convergence to the ground truth is guaranteed.}} 
\label{fig:CS_results_controlled}

\vspace{-5mm}
\end{figure}

\subsection{Linear Convergence for $\ell_1$-Norm Prior}
\label{sec:exp_l1_linear}

\tomt{
In the experiments above, we aimed to show that the insights about the convergence advantage when using the BP term rather than the LS term are reflected in {\em practical applications}. Therefore, we used {\em natural images} and regularization parameter setting that is {\em uniform} across all test images. The quality of the reconstructed image at each iteration was measured by its PSNR with respect to (w.r.t.) the ground truth image $\x_{gt}$, which is the most common quality assessment measure that is used by practitioners. While the inherent convergence advantage of BP and its dependence on the condition number of $\A\A^T$ are already observed in these experiments, the ``PSNR (w.r.t. $\x_{gt}$) vs.~iteration" curves themselves may not display the linear convergence that is 
suggested by our theory. Our goal in this subsection is to close this gap.} 

\tomt{
To this end, we first 
examine the PSNR w.r.t.~the final stationary point of the PGD, i.e., $\x_{*}$ (obtained by a preceding application of the algorithm), rather than w.r.t.~$\x_{gt}$. Such an example appears in Fig.~\ref{fig:CS_results_psnr_vs_iter_stat_point}, where we consider Gaussian CS with $m/n = 0.5$ and SNR of 20dB, and present PSNR curves (averaged over the previous 4 test images), w.r.t.~both $\x_{gt}$ and $\x_{*}$, for PGD with $\ell_1$ prior, $R=1.5\mathrm{e}5$ and $\x_0=0$. The curves of PSNR w.r.t.~$\x_{gt}$ are the same as those that appear in Fig.~\ref{fig:CS_results_psnr_vs_iter}. The slopes of the two types of PSNRs are similar at early iterations, displaying the convergence advantage of the BP objective. The curves for the PSNR that is measured w.r.t.~$\x_{*}$ exhibit a more consistent linear shape. Yet, they mask the estimation accuracy (i.e., the distance from the true image, $\x_{gt}$), which is probably the most important property of a reconstruction method.} 

\tomt{
To further verify the linear convergence theory without compromising on estimation accuracy information, we turn to perform {\em controlled experiments}. We modify each ground truth image $\x_{gt}$ to make it a purely sparse signal: we keep only its 800 dominant Haar wavelet coefficients, which is slightly less than $\frac{m}{\mathrm{log}(n)}=\frac{0.5 \times 128^2}{\mathrm{log}(128^2)} \approx 844$. For this sparsity level, $\ell_1$ prior and number of Gaussian CS measurements, existing theory (e.g., in \cite{chandrasekaran2012convex}) ensures that $P_{LS}(\mathcal{C}_{s}(\x_{gt}))<1$, and thus by Proposition~\ref{prop1} it is also guaranteed that $P_{BP}(\mathcal{C}_{s}(\x_{gt}))<1$. For {\em each} test image we use $R=\|\x_{gt}\|_1$ (rather than a uniform parameter setting), which allows invoking Theorem~\ref{theorem1} with $\x_* = \x_{gt}$ (instead of with general stationary points) for which perfect reconstruction is ensured in the noiseless case. Under these controlled settings, we perform CS experiments with $m/n = 0.5$ and SNR of 20dB as well as without noise (SNR=$\infty$).} 

\tomt{
Figs.~\ref{fig:CS_results_psnr_vs_iter_snr20_sparse} and \ref{fig:CS_results_psnr_vs_iter_snrInf_sparse} show the PSNR w.r.t.~$\x_{gt}$ (averaged over the 4 test images) of the PGD reconstruction vs.~the iteration number for the two SNR levels. 
For SNR of 20dB both methods reach a plateau, associated with the noise term in \eqref{Eq_pgd_thm}. The curves are nearly linear at early iterations and have similar trends as those obtained in Fig.~\ref{fig:CS_results_psnr_vs_iter} for the uncontrolled experiment. In the noiseless case (SNR=$\infty$), the noise term is eliminated, and the methods can reach perfect accuracy with exact linear rates, in agreement with \eqref{Eq_pgd_thm_noiseless}. Both figures clearly 
demonstrate our theory and 
show the convergence advantage of using the BP objective.}

\subsection{DCGAN Prior}
\label{sec:exp_dcgan}

The recent advances in learning (deep) generative models have 
led to using them 
as priors in imaging inverse problems (see, e.g., \cite{bora2017compressed, shah2018solving, shady2019image}). 
\tomt{
In order to generate new samples that are similar to the training samples (e.g., samples of human faces), popular generative models, such as VAEs \cite{kingma2013auto} and GANs \cite{goodfellow2014generative}, learn a nonlinear transformation $\mathcal{G}(\cdot)$, typically referred to as the ``generator", that maps a random Gaussian {\em noise} vector of small dimension $\z \in \mathbb{R}^d$ to the signal space in $\mathbb{R}^n$ ($d \ll n$). 
The common approach to use such models as priors 
is to search for a reconstruction of $\x_{gt}$ only in the range of a pre-trained generator,}
i.e., in 
$\mathcal{K}_{\mathcal{G}} = \left \{ {\x} \in \mathbb{R}^n : \exists {\z} \in \mathbb{R}^d \,\,\, \mathrm{s.t.} \,\,\,  {\x}=\mathcal{G}({\z}) \right \}$. 
Note that the proposed PGD theory, which assumes in \eqref{Eq_set_Kr} that 
$\mathcal{K} = \left \{ {\x} \in \mathbb{R}^n : s({\x}) \leq R \right \}$, covers the above feasible set for $R=0$ and the 
non-convex 
$s({\x})=
\Big \{
    \begin{array}{lr}
      0,& {\x} \in \mathcal{K}_{\mathcal{G}} \\
      +\infty,& otherwise
    \end{array}$.

In the next experiments we use $\mathcal{G}(\z)$ that we obtained by training a DCGAN \cite{radford2015unsupervised} on the first 200,000 images (out of 202,599) of CelebA dataset. 
We use the $64 \times 64$ version of the images (so $n=64^2$) and a training procedure similar to \cite{radford2015unsupervised, bora2017compressed}. 
We start with the CS scenario from previous section, where $m/n=0.5$, the entries of the measurement matrix are i.i.d.~drawn from $\mathcal{N}(0,1/m)$, and the SNR is 20dB.
The last 10 images in CelebA are used as test images.

The recovery using each of the LS and BP objectives is based on 50 iterations of PGD with the typical step-size 
and initialization of ${\x}_0=\A^\dagger\y$.
As the projection $\mathcal{P}_{\mathcal{K}}({\x})$ we use $\mathcal{G}(\hat{\z})$, where $\hat{\z}$ is obtained by minimizing $\| {\x}- \mathcal{G}({\z}) \|_2^2$ with respect to ${\z}$. This inner  
minimization problem is carried out by 1000 iterations of ADAM \cite{kingma2014adam} with 
LR of 0.1 and multiple initializations. 
The value of ${\z}$ that gives the lowest $\| {\x}- \mathcal{G}({\z}) \|_2^2$ is chosen as $\hat{\z}$.
For the projection in the first PGD iteration we use the same 10 random initializations of ${\z}$ for both LS and BP. 
Projections in other PGD iterations use warm start from the preceding iteration. 
For both LS and BP 
the computational cost of a PGD iteration is similar, 
since the matrices $\A^T$ and $\A^\dagger$ are computed in advance. 
Moreover, now the per-iteration complexity is dominated by the  
projection. 
Thus, again, the overall complexity 
is dictated by the number of iterations.

\begin{figure}[t]
  \centering
  \begin{subfigure}[b]{0.5\linewidth}
    \centering\includegraphics[width=200pt]{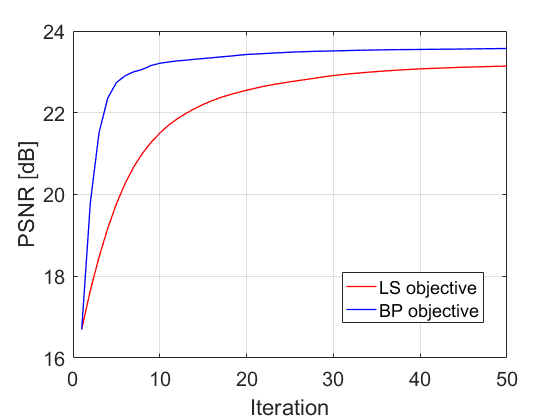}
    \caption{\label{fig:CS_results_psnr_vs_iter_dcgan}}
  \end{subfigure}%
    \begin{subfigure}[b]{0.5\linewidth}
    \centering\includegraphics[width=200pt]{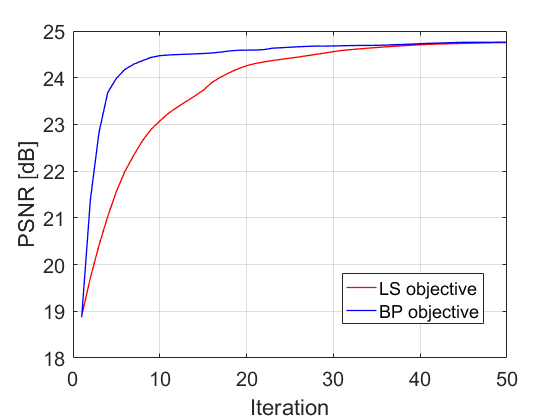}
    \caption{\label{fig:CS_results_psnr_vs_iter_dcgan_202592}}
  \end{subfigure}%
  \caption{\tomt{PSNR results vs.~the iteration number of PGD with DCGAN prior for the compressed sensing task with $m/n=0.5$ Gaussian measurements and SNR of 20dB.
  (\subref{fig:CS_results_psnr_vs_iter_dcgan}): PSNR results averaged over 10 CelebA test images.
  (\subref{fig:CS_results_psnr_vs_iter_dcgan_202592}): PSNR results for image 202592 in CelebA.
  In (\subref{fig:CS_results_psnr_vs_iter_dcgan_202592}), both LS and BP reach similar recoveries. The faster convergence of BP there implies its inherent advantage.
  The similarity between the rates in  
  (\subref{fig:CS_results_psnr_vs_iter_dcgan}) to those in  (\subref{fig:CS_results_psnr_vs_iter_dcgan_202592}) 
  hints that the inherent convergence advantage of BP is essential also in the 
  experiments
  where the recoveries are not similar (see details in the text).}}
\label{fig:CS_results_dcgan}

\vspace{1mm}

  \centering
    \centering\includegraphics[width=200pt]{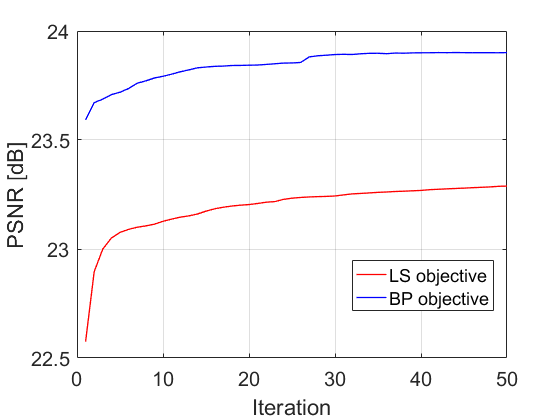}
  \caption{\tomt{PSNR results (averaged over 10 test images) vs.~the iteration number of PGD with DCGAN prior for the super-resolution task with Gaussian kernel and scale factor of 3.
  Note that the difference in the convergence rates is modest compared to the compressed sensing experiments. Yet, BP is still faster and yields more accurate results.}} 
\label{fig:SR_results_dcgan}
\vspace{-1em}
\end{figure}

Quantitative and visual CS recovery results appear in Appendix~\ref{app:visual}. 
In Fig.~\ref{fig:CS_results_psnr_vs_iter_dcgan} here, we show 
the average PSNR as a function of the iteration number.
Again, it is clear that BP objective requires significantly fewer iterations.
Since the DCGAN prior does not require a regularization parameter, the discussed ``extrinsic" source of faster convergence is not relevant. However, recall that DCGAN prior is (highly) non-convex, contrary to the $\ell_1$-norm prior. Therefore, $\x_{*}^{LS}$ and $\x_{*}^{BP}$, the PGD stationary points for LS and BP objectives, may be extremely different, and similarly, their two associated cones $\mathcal{C}_{s}(\x_{*}^{LS})$ and $\mathcal{C}_{s}(\x_{*}^{BP})$ may have very different geometries. 
This fact is another source for different convergence rates.

As an attempt to (approximately) isolate the effect of the intrinsic source on the convergence rates, we present in Fig.~\ref{fig:CS_results_psnr_vs_iter_dcgan_202592} the PSNR vs.~iteration number only for image 202592 in CelebA, where the recoveries using LS and BP objectives are relatively similar (see Fig.~\ref{fig:CS_dcgan2} in Appendix~\ref{app:visual}).
The similarity between the convergence rates in Figs.~\ref{fig:CS_results_psnr_vs_iter_dcgan} and \ref{fig:CS_results_psnr_vs_iter_dcgan_202592} hints that the inherent advantage of BP plays an essential role in its faster PGD convergence also for the other images in the examined scenario, 
where the recoveries are not similar.

Our final experiment considers 
a different observation model---the super-resolution (SR) 
task, where $\A$ composed 
of anti-aliasing filtering followed by down-sampling.
We use the widely examined scenario of scale factor 3  
and Gaussian filter of size $7 \times 7$ and standard deviation 1.6. For the reconstruction, we use PGD with DCGAN prior, initialized with bicubic upsampling of $\y$. 
Other configurations remain as before.

Fig.~\ref{fig:SR_results_dcgan} shows the average PSNR vs.~iteration number 
(more results appear in Appendix~\ref{app:visual}). 
Once again, the convergence of PGD for the BP objective is faster. However, this time the difference in the convergence rates is modest. Since in this SR experiment we have obtained significantly different recoveries for the LS and BP objectives 
(BP consistently yields higher PSNR, which can be explained by the analysis in \cite{tirer2019back}), 
we cannot try to isolate the effect of the intrinsic source, as done above. 
Yet, the results in this paper suggest that 
in this SR scenario 
the prior imposes a weaker restriction on the null space of $\A$ than in the CS scenario above.
In the analysis of Section~\ref{sec:convergence_intrin} this is translated to a smaller gap between $P_{LS}(\mathcal{C}_{s}(\x_{*}))$ and $P_{BP}(\mathcal{C}_{s}(\x_{*}))$,
and in the analysis of Section~\ref{sec:convergence_beyond} this is translated to a smaller contraction in Condition~\ref{cond3}.

\section{Conclusion}
\label{sec:conc}

In this paper we compared the convergence rate of PGD applied on LS and BP objectives, and identified an intrinsic source of a faster convergence for BP. 
Numerical experiments supported our theoretical findings for both convex ($\ell_1$-norm) and non-convex (pre-trained DCGAN) priors.
For the $\ell_1$-norm prior, we also provided numerical experiments that connected the PGD analysis with the behavior 
observed for proximal methods.  
A study of the latter has further highlighted 
BP's potential advantage when $\A\A^T$ is badly conditioned. 

\tomt{
In the constrained optimization problem that we studied (i.e., the problem in \eqref{Eq_cost_func_general2}) the objective is the data fidelity term, $\ell({\x})$, and the constraint is imposed on the prior term, $s({\x})$.
An interesting direction for future research is to compare the effects of the LS and BP terms on a different constrained form, which is also used in practice \cite{afonso2010augmented}, where the objective is the prior term and the constraint is on the fidelity term.
Our PGD analysis does not cover this form, because it requires the objective to be continuously differentiable (which is not obeyed by many popular priors) and builds on existing results related to cones induced by sublevel sets of some prior functions.}

\appendix

\section{Proofs for Section~\ref{sec:convergence}}
\label{app:proofs}

\subsection{Proof of Theorem~\ref{theorem1}}
\label{app:pgd}

In this section we prove Theorem~\ref{theorem1}.
To this end we adopt the following three lemmas from \cite{oymak2017sharp} (numbered there as Lemmas 16--18).

\begin{lemma}
\label{lemma:cone_norm}
Let $\mathcal{C} \subset \mathbb{R}^n$ be a closed cone and $\bm{v} \in \mathbb{R}^n$. Then
\begin{equation}
\label{Eq_cone_norm}
\| \mathcal{P}_{\mathcal{C}} (\bm{v}) \|_2 = \supr{ \bm{u} \in \mathcal{C} \cap \mathbb{B}^n } \bm{u}^T \bm{v}.
\end{equation}
\end{lemma}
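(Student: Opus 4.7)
The plan is to establish both inequalities ($\leq$ and $\geq$) using only the cone property of $\mathcal{C}$ (i.e., closure under positive scaling), without relying on convexity, since the tangent cone $\mathcal{C}_{s}(\x_{*})$ of Definition~\ref{def:DnC} need not be convex when $s$ is non-convex. The key structural fact I would use is a ``ray optimality'' identity: if $\bm{w}$ is a minimizer of $\|\bm{v}-\bm{x}\|_2$ over $\mathcal{C}$, then the scalar function $t \mapsto \|\bm{v}-t\bm{w}\|_2^2$ attains its minimum on $[0,\infty)$ at $t=1$, which forces $\bm{w}^T\bm{v} = \|\bm{w}\|_2^2$.

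First I would handle the nondegenerate case $\mathcal{P}_{\mathcal{C}}(\bm{v}) \neq \bm{0}$. For the $\geq$ direction, I would set $\bm{u}^{*} = \mathcal{P}_{\mathcal{C}}(\bm{v})/\|\mathcal{P}_{\mathcal{C}}(\bm{v})\|_2$, which lies in $\mathcal{C}\cap\mathbb{S}^{n-1} \subseteq \mathcal{C}\cap\mathbb{B}^n$ by the cone property, and evaluate $\bm{u}^{*T}\bm{v}$ using the ray optimality identity to obtain exactly $\|\mathcal{P}_{\mathcal{C}}(\bm{v})\|_2$, thereby showing the supremum is at least this value.

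For the $\leq$ direction, I would fix an arbitrary $\bm{u} \in \mathcal{C}\cap\mathbb{B}^n$ and exploit the fact that the ray $\{t\bm{u} : t\geq 0\}$ lies entirely in $\mathcal{C}$. Comparing $\|\bm{v}-t\bm{u}\|_2^2$ against the optimal distance $\|\bm{v}\|_2^2 - \|\mathcal{P}_{\mathcal{C}}(\bm{v})\|_2^2$ and minimizing over $t \geq 0$ yields $(\bm{u}^T\bm{v})^2/\|\bm{u}\|_2^2 \leq \|\mathcal{P}_{\mathcal{C}}(\bm{v})\|_2^2$ whenever $\bm{u}^T\bm{v} > 0$; combined with $\|\bm{u}\|_2 \leq 1$, this gives $\bm{u}^T\bm{v} \leq \|\mathcal{P}_{\mathcal{C}}(\bm{v})\|_2$, and the case $\bm{u}^T\bm{v}\leq 0$ is trivial.

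The degenerate case $\mathcal{P}_{\mathcal{C}}(\bm{v}) = \bm{0}$ requires a brief separate argument: the inequality $\|\bm{v}\|_2^2 \leq \|\bm{v}-t\bm{u}\|_2^2$ for all $\bm{u} \in \mathcal{C}$ and $t \geq 0$, together with taking $t \to 0^+$, forces $\bm{u}^T\bm{v} \leq 0$ on all of $\mathcal{C}$, so the supremum is $0 = \|\mathcal{P}_{\mathcal{C}}(\bm{v})\|_2$. I do not foresee a substantial obstacle here; the only point that requires care is avoiding any invocation of convexity of $\mathcal{C}$ (which would tempt one to use the variational inequality characterization of projection onto convex sets), and instead leaning solely on the one-dimensional ray optimality that uses only the cone property.
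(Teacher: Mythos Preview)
The paper does not actually prove this lemma; it is quoted verbatim from \cite{oymak2017sharp} (Lemma~16 there) without argument. Your proposal supplies a correct self-contained proof. The ray-optimality identity $\bm{w}^T\bm{v}=\|\bm{w}\|_2^2$ for any minimizer $\bm{w}$ of $\|\bm{v}-\cdot\|_2$ over $\mathcal{C}$ is indeed the right tool, and your care in using only the cone property (positive homogeneity) rather than convexity is well placed: the tangent cone $\mathcal{C}_{s}(\x_{*})$ to which the lemma is applied in the proof of Theorem~\ref{theorem1} need not be convex when $s$ is non-convex, so the standard variational inequality for projections onto convex sets is unavailable. One minor remark: your argument implicitly shows that all Euclidean projections of $\bm{v}$ onto $\mathcal{C}$ have the same norm (so the left-hand side of \eqref{Eq_cone_norm} is unambiguous even when the projection is non-unique), which is worth stating explicitly since non-convex closed cones can admit multiple nearest points.
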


\begin{lemma}
\label{lemma:proj_shift}
Let $\mathcal{K} \subset \mathbb{R}^n$ be a closed set and $\bm{u},\bm{v} \in \mathbb{R}^n$. The projection onto $\mathcal{K}$ obeys
\begin{equation}
\label{Eq_proj_shift}
\mathcal{P}_{\mathcal{K}} (\bm{u}+\bm{v}) - \bm{u} = \mathcal{P}_{\mathcal{K}-\bm{u}} (\bm{v}).
\end{equation}
\end{lemma}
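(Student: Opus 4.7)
The plan is to prove the identity \eqref{Eq_proj_shift} directly from the definition of the Euclidean projection as an $\arg\min$ of the squared distance, using a translation (change of variables) argument. By definition,
\begin{equation*}
\mathcal{P}_{\mathcal{K}}(\bm{u}+\bm{v}) = \argmin{\bm{x} \in \mathcal{K}} \| (\bm{u}+\bm{v}) - \bm{x} \|_2^2,
\end{equation*}
and this $\arg\min$ exists (as a nonempty set) because $\mathcal{K}$ is closed.

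First I would perform the substitution $\bm{y} = \bm{x} - \bm{u}$. The constraint $\bm{x} \in \mathcal{K}$ is then equivalent to $\bm{y} \in \mathcal{K} - \bm{u}$, and the objective becomes $\| (\bm{u}+\bm{v}) - (\bm{y}+\bm{u}) \|_2^2 = \| \bm{v} - \bm{y} \|_2^2$. Since the substitution is a bijection between the feasible sets and preserves the objective value pointwise, the minimizer in the $\bm{y}$ variable is exactly $\mathcal{P}_{\mathcal{K}-\bm{u}}(\bm{v})$, and the corresponding minimizer in the $\bm{x}$ variable is $\mathcal{P}_{\mathcal{K}-\bm{u}}(\bm{v}) + \bm{u}$. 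Rearranging gives \eqref{Eq_proj_shift}.

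There is no real obstacle here: the result is a routine consequence of the translation invariance of the Euclidean norm, and the closedness of $\mathcal{K}$ (hence of $\mathcal{K}-\bm{u}$) is what guarantees both sides are well defined as (possibly set-valued) projections. The only minor subtlety, if one wishes to be careful, is that when $\mathcal{K}$ is nonconvex the projection need not be unique; in that case the identity should be read as an equality of sets, and the argument above still applies because the bijection $\bm{x} \mapsto \bm{x} - \bm{u}$ maps the set of minimizers on the left to the set of minimizers on the right.
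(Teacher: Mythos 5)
Your proof is correct. Note that the paper does not actually prove this lemma itself: it is imported verbatim (as one of Lemmas 16--18 of \cite{oymak2017sharp}), so there is no in-paper argument to compare against. Your translation-invariance argument --- substituting $\bm{y}=\bm{x}-\bm{u}$ in the $\arg\min$ defining the projection, observing that the objective becomes $\|\bm{v}-\bm{y}\|_2^2$ and the feasible set becomes $\mathcal{K}-\bm{u}$ --- is the standard and complete proof of this fact, and your remark that for nonconvex $\mathcal{K}$ the identity should be read as an equality of sets (or of consistent selections) is exactly the right caveat, since the paper does apply this lemma with nonconvex constraint sets (e.g., the DCGAN range). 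One could add, for completeness, that nonemptiness of the minimizer set also uses coercivity of $\|(\bm{u}+\bm{v})-\bm{x}\|_2^2$ together with closedness of $\mathcal{K}$ (and that $\mathcal{K}$ is implicitly assumed nonempty), but this is a minor point and does not affect the validity of the argument.
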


\begin{lemma}
\label{lemma:cone_bound}
Let $\mathcal{D}$ and $\mathcal{C}$ be a nonempty and closed set and a closed cone, respectively, such that $\0 \in \mathcal{D}$ and $\mathcal{D} \subseteq \mathcal{C}$. Then for all $\bm{v} \in \mathbb{R}^n$
\begin{equation}
\label{Eq_cone_bound}
\| \mathcal{P}_{\mathcal{D}} (\bm{v}) \|_2 \leq \kappa \| \mathcal{P}_{\mathcal{C}} (\bm{v}) \|_2,
\end{equation}
where $\kappa=1$ if $\mathcal{D}$ is a convex set and $\kappa=2$ otherwise.
\end{lemma}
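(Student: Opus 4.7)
The plan is to bound $\|\mathcal{P}_{\mathcal{D}}(\bm{v})\|_2$ by using the variational expression for $\|\mathcal{P}_{\mathcal{C}}(\bm{v})\|_2$ from Lemma~\ref{lemma:cone_norm} with a cleverly chosen competitor obtained by normalizing $\mathcal{P}_{\mathcal{D}}(\bm{v})$ itself. Let $\bm{p}:=\mathcal{P}_{\mathcal{D}}(\bm{v})$. If $\bm{p}=\0$ the inequality is trivial, so assume $\bm{p}\neq\0$. Because $\bm{p}\in\mathcal{D}\subseteq\mathcal{C}$ and $\mathcal{C}$ is a cone, the normalization $\bm{u}:=\bm{p}/\|\bm{p}\|_2$ lies in $\mathcal{C}\cap\mathbb{S}^{n-1}\subseteq\mathcal{C}\cap\mathbb{B}^n$, so Lemma~\ref{lemma:cone_norm} yields
\begin{equation*}
\|\mathcal{P}_{\mathcal{C}}(\bm{v})\|_2 \;=\; \supr{\bm{w}\in\mathcal{C}\cap\mathbb{B}^n}\bm{w}^T\bm{v} \;\geq\; \bm{u}^T\bm{v} \;=\; \frac{\bm{p}^T\bm{v}}{\|\bm{p}\|_2}.
\end{equation*}
The rest of the argument then reduces to lower-bounding $\bm{p}^T\bm{v}$ in terms of $\|\bm{p}\|_2^2$, with two different constants depending on whether $\mathcal{D}$ is convex.

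For the generic ($\kappa=2$) case I would use the point $\0\in\mathcal{D}$ as a feasible competitor against $\bm{p}$ in the defining projection problem: since $\bm{p}$ minimizes $\|\bm{v}-\cdot\|_2^2$ over $\mathcal{D}$, one has $\|\bm{p}-\bm{v}\|_2^2\leq \|\0-\bm{v}\|_2^2=\|\bm{v}\|_2^2$, which after expansion gives $\|\bm{p}\|_2^2\leq 2\bm{p}^T\bm{v}$. Substituting this into the display above produces $\|\mathcal{P}_{\mathcal{C}}(\bm{v})\|_2\geq \|\bm{p}\|_2/2$, which is the claim for $\kappa=2$. For the convex case I would sharpen this by invoking the variational inequality that characterizes projection onto a closed convex set, namely $\langle \bm{v}-\bm{p},\,\bm{u}-\bm{p}\rangle\leq 0$ for every $\bm{u}\in\mathcal{D}$; plugging in the admissible point $\bm{u}=\0$ gives $\bm{p}^T\bm{v}\geq \|\bm{p}\|_2^2$, a factor-of-two improvement. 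Inserting this stronger bound into the display yields $\|\mathcal{P}_{\mathcal{C}}(\bm{v})\|_2\geq \|\bm{p}\|_2$, establishing the $\kappa=1$ claim.

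There is no serious obstacle in this argument; the only points requiring care are (i) disposing of the degenerate $\bm{p}=\0$ case before normalizing, and (ii) verifying that $\bm{p}/\|\bm{p}\|_2$ is still in $\mathcal{C}$, which is exactly where the cone hypothesis on $\mathcal{C}$ is used. The quantitative gap between $\kappa=1$ and $\kappa=2$ is traceable to a single step: the convex projection inequality is an equality-of-first-order, while the comparison of $\bm{p}$ with $\0$ through the quadratic objective costs a factor of $2$ because the cross term $\bm{p}^T\bm{v}$ appears doubled upon expansion.
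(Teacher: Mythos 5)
Your proof is correct. Note that the paper itself gives no proof of Lemma~\ref{lemma:cone_bound}: it is adopted verbatim, together with Lemmas~\ref{lemma:cone_norm} and~\ref{lemma:proj_shift}, from \cite{oymak2017sharp}, so there is no in-paper argument to compare against; your write-up is a self-contained substitute. The structure you use is the natural one given Lemma~\ref{lemma:cone_norm}: normalize $\bm{p}=\mathcal{P}_{\mathcal{D}}(\bm{v})$ to obtain a feasible point of the supremum in \eqref{Eq_cone_norm} (valid precisely because $\mathcal{D}\subseteq\mathcal{C}$ and $\mathcal{C}$ is a cone), then lower-bound $\bm{p}^T\bm{v}$ via optimality of $\bm{p}$ against the competitor $\0\in\mathcal{D}$, which gives $\bm{p}^T\bm{v}\geq\tfrac{1}{2}\|\bm{p}\|_2^2$ in general, and via the variational inequality for projection onto a closed convex set, which sharpens this to $\bm{p}^T\bm{v}\geq\|\bm{p}\|_2^2$ and explains the gap between $\kappa=2$ and $\kappa=1$. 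The two points of care you mention are handled properly: the degenerate case $\bm{p}=\0$ is dispatched first, and in the non-convex case the projection onto the nonempty closed set $\mathcal{D}$ may be non-unique, but your inequality holds for any minimizer, which is all the lemma (and its use in \eqref{Eq_pgd_thm_proof}) requires.
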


Let us now prove Theorem~\ref{theorem1}.
Since $s(\x_*)=R$, we have that 
${\x}_{t}-\x_*$ is inside the descent set $\mathcal{D}_{s}(\x_{*})$ for all $t$.
\tomt{Specifically, recall Definition~\ref{def:DnC} and note that $s(\x_*+({\x}_{t}-\x_*))=s({\x}_{t}) \leq R = s(\x_*)$, where the inequality uses the fact that ${\x}_{t} \in \mathcal{K} = \left \{ {\x} \in \mathbb{R}^n : s({\x}) \leq R \right \}$ by construction of the PGD.} 
For simplicity let us define $\mathcal{D} \triangleq \mathcal{D}_{s}(\x_{*})$ and $\mathcal{C} \triangleq \mathcal{C}_{s}(\x_{*})$.
We obtain \eqref{Eq_pgd_thm} by 
\begin{align}
\label{Eq_pgd_thm_proof}
\|{\x}_{t+1}-\x_*\|_2 &= \| \mathcal{P}_{\mathcal{K}} \left ( {\x}_{t} + \mu \W ( \y - \A {\x_t} ) \right ) - \x_*\|_2 \nonumber \\
& \stackrel{(a)}{=} \| \mathcal{P}_{\mathcal{K}-\x_*} \left ( {\x}_{t} + \mu \W ( \y - \A {\x_t} ) - \x_* \right ) \|_2 \nonumber \\
& \stackrel{(b)}{=} \| \mathcal{P}_{\mathcal{D}} \left ( {\x}_{t} + \mu \W ( \y - \A {\x_t} ) - \x_* \right ) \|_2 \nonumber \\
& \stackrel{(c)}{\leq} \kappa_s \| \mathcal{P}_{\mathcal{C}} \left ( {\x}_{t} + \mu \W ( \y - \A {\x_t} ) - \x_* \right ) \|_2 \nonumber \\
&= \kappa_s \| \mathcal{P}_{\mathcal{C}} \left ( (\I_n - \mu \W\A) ({\x}_{t} - \x_*) + \mu \W ( \y - \A \x_* ) \right ) \|_2 \nonumber \\
& \stackrel{(d)}{=} \kappa_s \supr{ \bm{v} \in \mathcal{C} \cap \mathbb{B}^n } \bm{v}^T  [ (\I_n - \mu \W\A) ({\x}_{t} - \x_*)  
+ \mu \W ( \y - \A \x_* )  ] \nonumber \\
& \leq \tomt{ \kappa_s \supr{ \bm{v} \in \mathcal{C} \cap \mathbb{B}^n } \bm{v}^T (\I_n - \mu \W\A) ({\x}_{t} - \x_*) 
+ \kappa_s \mu \supr{ \bm{v} \in \mathcal{C} \cap \mathbb{B}^n } \bm{v}^T \W ( \y - \A \x_* ) } \nonumber \\
& = \tomt{  \kappa_s \supr{ \bm{v} \in \mathcal{C} \cap \mathbb{B}^n } \bm{v}^T (\I_n - \mu \W\A) \frac{{\x}_{t} - \x_*}{\|{\x}_{t} - \x_*\|_2} \| {\x}_{t} - \x_* \|_2 + \kappa_s \mu \xi(\mathcal{C}) } \nonumber \\
& \stackrel{(e)}{\leq} \tomt{ \kappa_s \supr{ \bm{v},\bm{u} \in \mathcal{C} \cap \mathbb{B}^n } \bm{v}^T (\I_n - \mu \W\A) \bm{u}  \|{\x}_{t} - \x_*\|_2 + \kappa_s \mu \xi(\mathcal{C}) } \nonumber \\
& = \kappa_s \rho(\mathcal{C}) \|{\x}_{t} - \x_*\|_2 + \kappa_s \mu \xi(\mathcal{C}),
\end{align}
where $(a)$ follows from Lemma \ref{lemma:proj_shift} \tomt{(with $\bm{u}=\x_*$ and $\bm{v}={\x}_{t} + \mu \W ( \y - \A {\x_t} ) - \x_*$)}; $(b)$ follows from plugging $R=s(\x_*)$ in the definition of $\mathcal{K}$ (given in \eqref{Eq_set_Kr}); $(c)$ follows from Lemma \ref{lemma:cone_bound}; $(d)$ follows from Lemma \ref{lemma:cone_norm}; and $(e)$ 
\tomt{follows from $\frac{{\x}_{t} - \x_*}{\|{\x}_{t} - \x_*\|_2} \in \mathcal{D} \cap \mathbb{B}^n  \subseteq \mathcal{C} \cap \mathbb{B}^n$.}

\subsection{Proof of Proposition~\ref{prop_ls}}
\label{app:prop2}

For the LS objective, we have $\W=\A^T$ and $\mu_{LS} =  \| \nabla^2 \ell_{LS} \|^{-1} = \| \A^T\A \|^{-1}$. Therefore, $\I_n - \mu_{LS} \A^T \A$ is positive semi-definite, and using the generalized Cauchy-Schwarz inequality we get
\begin{align}
\label{Eq_pgd_rate_ls3}
\rho(\mathcal{C}_{s}(\x_{*})) &= \supr{ \bm{u},\bm{v} \in \mathcal{C}_{s}(\x_{*}) \cap \mathbb{S}^{n-1} } \bm{u}^T (\I_n - \mu_{LS} \A^T \A) \bm{v}  \nonumber \\
&\leq \supr{ \bm{u},\bm{v} \in \mathcal{C}_{s}(\x_{*}) \cap \mathbb{S}^{n-1} } \sqrt{ \bm{u}^T (\I_n - \mu_{LS} \A^T \A) \bm{u} } 
 \sqrt{ \bm{v}^T (\I_n - \mu_{LS} \A^T \A) \bm{v} } \nonumber \\
&= \supr{ \bm{u} \in \mathcal{C}_{s}(\x_{*}) \cap \mathbb{S}^{n-1} } \bm{u}^T (\I_n - \mu_{LS} \A^T \A) \bm{u}  \nonumber \\
&= 1 - \mu_{LS} \infim{ \bm{u} \in \mathcal{C}_{s}(\x_{*}) \cap \mathbb{S}^{n-1} } \| \A \bm{u} \|_2^2  \,\,\, = P_{LS}(\mathcal{C}_{s}(\x_{*})).
\end{align}

\subsection{Proof of Proposition~\ref{prop_bp}}
\label{app:prop3}

For the BP objective, we have $\W=\A^\dagger=\A^T(\A\A^T)^{-1}$ and $\mu_{BP} =  \| \nabla^2 \ell_{BP} \|^{-1} = \| \A^\dagger\A \|^{-1}=1$. 
Since $\I_n - \mu_{BP} \A^\dagger \A$ is positive semi-definite, using similar steps as those in \eqref{Eq_pgd_rate_ls3} we get
\begin{align}
\label{Eq_pgd_rate_bp3}
\rho(\mathcal{C}_{s}(\x_{*})) &= \supr{ \bm{u},\bm{v} \in \mathcal{C}_{s}(\x_{*}) \cap \mathbb{S}^{n-1} } \bm{u}^T (\I_n - \mu_{BP} \A^\dagger \A) \bm{v}  \nonumber \\
&\leq 1 - \mu_{BP} \infim{ \bm{u} \in \mathcal{C}_{s}(\x_{*}) \cap \mathbb{S}^{n-1} } \| (\A\A^T)^{-\frac{1}{2}}\A \bm{u} \|_2^2  \nonumber \\ 
& = P_{BP}(\mathcal{C}_{s}(\x_{*})).
\end{align}

\section{Numerical Experiments Demonstrating $P_{BP} < P_{LS}$ (Strict Inequality)}
\label{app:conjecture}

\begin{figure}[t]
  \centering
  \begin{subfigure}[b]{0.5\linewidth}
    \centering\includegraphics[width=150pt]{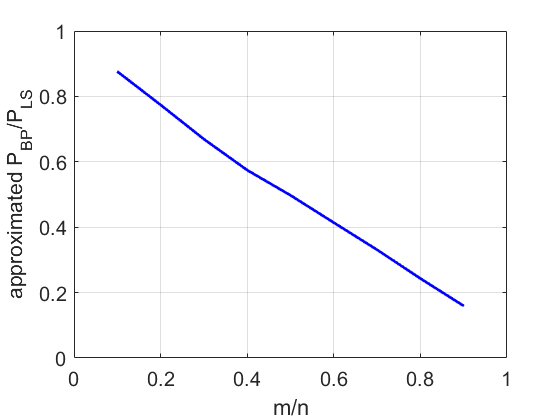}
    \caption{\label{fig:approx_ratio_vs_m}}
  \end{subfigure}%
  \begin{subfigure}[b]{0.5\linewidth}
    \centering\includegraphics[width=150pt]{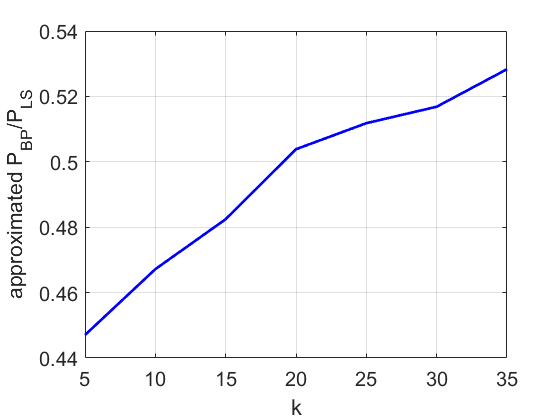}
    \caption{\label{fig:approx_ratio_vs_k}}
  \end{subfigure}
  \caption{Approximate ratio ${P}_{BP}/{P}_{LS}$ for $k$-sparse $\u \in \mathbb{R}^{1024}$ and Gaussian $\A$ (see the text for details) for: (\subref{fig:approx_ratio_vs_m}) $k=20$ and $m$ is varied; (\subref{fig:approx_ratio_vs_k}) $m=512$ and $k$ is varied.}
\label{fig:approx_ratio}
\end{figure}

In this section, we present 
experiments that support 
our conjecture from Section~\ref{sec:convergence_intrin}:
The inequality in Proposition \ref{prop1} is strict, i.e., that $P_{BP}(\mathcal{C}_{s}(\x_{*})) < P_{LS}(\mathcal{C}_{s}(\x_{*}))$, in generic cases when the entries of  
$\A \in \mathbb{R}^{m \times n}$ are i.i.d Gaussians $\mathcal{N}(0,\frac{1}{m})$, the recovered signals belong to parsimonious models and feasible sets are appropriately chosen.

We consider a Gaussian $\A$, as mentioned in Section~\ref{sec:convergence_intrin}, and $\mathcal{C}$ which is the set of $k$-sparse signals, i.e., the number of non-zero elements in any $\u \in \mathcal{C}$ is at most $k$.
In this case, $\infim{ \bm{u} \in \mathcal{C} \cap \mathbb{S}^{n-1} } \|\A \bm{u} \|_2^2$ can be approximated by: 1) drawing many supports, i.e., choices of $k$ out of the $n$ columns of $\A$; 2) for each support creating an $m \times k$ matrix $\tilde{\A}$ and computing $\sigma_{min}(\tilde{\A}^T\tilde{\A})$; and 3) keeping the minimal value.
Plugging the approximation of $\infim{ \bm{u} \in \mathcal{C} \cap \mathbb{S}^{n-1} } \|\A \bm{u} \|_2^2$ in \eqref{Eq_pgd_rate_ls}, we obtain an approximation of $P_{LS}(\mathcal{C})$.

Similarly, to approximate $\infim{ \bm{u} \in \mathcal{C} \cap \mathbb{S}^{n-1} } \|(\A\A^T)^{-\frac{1}{2}}\A \bm{u} \|_2^2$ the same procedure can be done with $\sigma_{min}(\tilde{\A}^T(\A\A^T)^{-1}\tilde{\A})$. 
Plugging the approximation of $\infim{ \bm{u} \in \mathcal{C} \cap \mathbb{S}^{n-1} } \|(\A\A^T)^{-\frac{1}{2}}\A \bm{u} \|_2^2$ in \eqref{Eq_pgd_rate_bp}, we obtain an approximation of $P_{BP}(\mathcal{C})$.

Fig.~\ref{fig:approx_ratio_vs_m} shows the approximate ratio $\hat{P}_{BP}/\hat{P}_{LS}$ for $n=1024, k=20$ and different values of $m$. Fig.~\ref{fig:approx_ratio_vs_k} shows this ratio for $n=1024, m=512$ and different values of $k$. In both figures $\hat{P}_{BP}$ is {\em strictly} smaller than $\hat{P}_{LS}$, which agrees with our conjecture.

\section{More Details on Condition~\ref{cond3}}
\label{app:Contraction}

As explained in Section~\ref{sec:convergence_beyond}, the non-expansive property that is stated in \eqref{Eq_non_expansive} is satisfied by the proximal mapping of any convex function \cite{beck2017first}.
However, this property is not enough for distinguishing between the convergence rates of the proximal gradient method \eqref{Eq_ista} for the LS and BP terms. 
Therefore, stronger conditions on $\beta s(\cdot)$ are required. 
One such condition is that the proximal mapping of $\beta s(\cdot)$ is a contraction, i.e., there exists $0 \leq  k_{\beta s(\cdot)} < 1$ such that for all ${\z}_1, {\z}_2$
\begin{align}
\label{Eq_contraction_full}
\| \mathrm{prox}_{\beta s(\cdot)}({\z}_1) - \mathrm{prox}_{\beta s(\cdot)}({\z}_2) \|_2 \leq k_{\beta s(\cdot)} \| {\z}_1 - {\z}_2 \|_2. 
\end{align}
Note that even though the above condition is rather strict, it is satisfied by some prior functions such as Tikhonov regularization \cite{tikhonov1963solution} (where $s({\x})=\frac{1}{2}\|\D{\x}\|_2^2$ and $\D^T\D$ is positive definite) or even a recent GMM-based prior \cite{teodoro2018convergent} (see Lemma 2 there).

Condition~\ref{cond3}, which is required in Theorem~\ref{theorem_ista}, is less demanding than \eqref{Eq_contraction_full}. 
Specifically, 
satisfying \eqref{Eq_contraction_full} with $k_{\beta s(\cdot)}$ implies satisfying \eqref{Eq_contraction} with $\delta_{\A,\beta s(\cdot)}=1-k_{\beta s(\cdot)}$. This is a simple consequence of the Pythagorean theorem and the fact that $0 \leq k_{\beta s(\cdot)}<1$
\begin{align}
\label{Eq_contraction_full2}
\| \mathrm{prox}_{\beta s(\cdot)}({\z}_1) - \mathrm{prox}_{\beta s(\cdot)}({\z}_2) \|_2^2 
& \leq k_{\beta s(\cdot)}^2 \| {\z}_1 - {\z}_2 \|_2^2  \nonumber \\
& = k_{\beta s(\cdot)}^2 \left ( \| \P_A({\z}_1 - {\z}_2) \|_2^2 + \| \Q_A({\z}_1 - {\z}_2) \|_2^2 \right ) \nonumber \\
& \leq  \| \P_A({\z}_1 - {\z}_2) \|_2^2 + \| k_{\beta s(\cdot)} \Q_A({\z}_1 - {\z}_2) \|_2^2  \nonumber \\
& =  \| (\P_A + k_{\beta s(\cdot)} \Q_A)({\z}_1 - {\z}_2) \|_2^2. 
\end{align}
Therefore, priors that satisfy \eqref{Eq_contraction_full} (e.g., \cite{tikhonov1963solution,teodoro2018convergent}) also satisfy Condition~\ref{cond3}.

Another property of Condition~\ref{cond3} relates to the effect of the regularization parameter $\beta$ on $\delta_{\A,\beta s(\cdot)}$. 
Note that for $\beta_1 \geq \beta_2$ we have that the weight of the prior $s(\cdot)$ in the proximal mapping \eqref{def_prox} is larger for $\mathrm{prox}_{\beta_1 s(\cdot)}({\z})$ than for $\mathrm{prox}_{\beta_2 s(\cdot)}({\z})$.
Therefore, it is expected to impose a stronger restriction on the null space of $\A$, or equivalently $\delta_{\A,\beta_1 s(\cdot)} \geq \delta_{\A,\beta_2 s(\cdot)}$.

\section{Proof of Theorem~\ref{theorem_ista}}
\label{app:proof_ista}

In this section we prove Theorem~\ref{theorem_ista}. 
The existence of the stationary point $\x_*=\mathrm{prox}_{\mu \beta s(\cdot)}(\x_* - \mu \nabla  \ell(\x_*))$ (that is a minimizer of \eqref{Eq_cost_func_general}) to which proximal gradient descent with step-size $\mu=1/\tilde{\sigma}_{max}$ converges follows from the  
convergence result 
in \cite{beck2009fast}. Yet, this result guarantees only sub-linear convergence. In the following we obtain the desired linear convergence result.
\begin{align}
\label{Eq_ista_thm_proof}
\|{\x}_{t+1}-\x_*\|_2 
&= \| \mathrm{prox}_{\mu \beta s(\cdot)}({\x}_{t} - \mu \nabla  \ell({\x}_{t})) -  \mathrm{prox}_{\mu \beta s(\cdot)}(\x_* - \mu \nabla  \ell(\x_*)) \|_2 \nonumber \\
& \stackrel{(a)}{\leq} \big \| \left (\P_A + (1-\delta_{\A,\mu\beta s(\cdot)})\Q_A \right ) 
( ({\x}_{t} - \mu \nabla  \ell({\x}_{t})) - (\x_* - \mu \nabla  \ell(\x_*)) ) \big \|_2 \nonumber \\
& \stackrel{(b)}{=} \big \| \big ( (\P_A + (1-\delta_{\A,\mu\beta s(\cdot)})\Q_A ) {\x}_{t} - \mu \nabla  \ell({\x}_{t})  \big ) \nonumber \\ 
& \hspace{10mm} 
- \big ( (\P_A + (1-\delta_{\A,\mu\beta s(\cdot)})\Q_A ) \x_* - \mu \nabla  \ell(\x_*)  \big ) \big \|_2  \nonumber \\
& \stackrel{(c)}{=} \left \| \g({\x}_t) - \g(\x_*) \right \|_2,
\end{align}
where $(a)$ follows from Condition~\ref{cond3}; $(b)$ follows from the assumption that $\nabla\ell(\cdot) \in \mathrm{range}(\A^T)$, which implies $\P_A \nabla\ell(\cdot) = \nabla\ell(\cdot)$ and $\Q_A \nabla\ell(\cdot) =0$; and $(c)$ uses the definition
$\g({\x}) \triangleq (\P_A + (1-\delta_{\A,\mu\beta s(\cdot)})\Q_A ) {\x} - \mu \nabla  \ell({\x})$. 

Using Taylor series expansion consideration, there exists a point $\bxi$ in the line between ${\x}_{t}$ and $\x_*$, such that $\| \g({\x}_t) - \g(\x_*) \|_2 = \| \nabla \g(\bxi) ( {\x}_t  - \x_* ) \|_2$ . Therefore,
\begin{align}
\label{Eq_ista_thm_proof_2}
\| \g({\x}_t) - \g(\x_*) \|_2 & \leq \| \nabla \g(\bxi) \| \| {\x}_t  - \x_* \|_2 \nonumber \\ 
&  \leq  \maxim{\tilde{\bxi}} \| \nabla \g(\tilde{\bxi}) \| \| {\x}_t  - \x_* \|_2,
\end{align}
which yields
\begin{align}
\label{Eq_ista_thm_proof_3}
\|{\x}_{t+1}-\x_*\|_2 \leq  \maxim{\tilde{\bxi}} \| \nabla \g(\tilde{\bxi}) \| \| {\x}_t  - \x_* \|_2.
\end{align}
Note that 
\begin{align}
\nabla \g({\x}) = \P_A - \mu \nabla^2  \ell({\x}) + (1-\delta_{\A,\mu\beta s(\cdot)})\Q_A.
\end{align}
Therefore, we have
\begin{align}
\maxim{\tilde{\bxi}} \| \nabla \g(\tilde{\bxi}) \| 
 = \mathrm{max} \left \{ |1-\mu \tilde{\sigma}_{max}|, |1-\mu \tilde{\sigma}_{min}| , 1 - \delta_{\A,\mu\beta s(\cdot)}  \right \}.
\end{align}
Recall that $\tilde{\sigma}_{max}$ is the largest eigenvalue of $\nabla^2 \ell(\cdot)$ and $\tilde{\sigma}_{max}$ is the smallest {\em non-zero} eigenvalue of $\nabla^2 \ell(\cdot)$.
For the widely used step-size $\mu=1/\tilde{\sigma}_{max}$ considered in the theorem we get 
$\maxim{\tilde{\bxi}} \| \nabla \g(\tilde{\bxi}) \| = \mathrm{max} \left \{ 0, 1-\frac{\tilde{\sigma}_{min}}{\tilde{\sigma}_{max}} , 1 - \delta_{\A,\frac{\beta}{\tilde{\sigma}_{max}}s(\cdot)}  \right \}$. 
Finally, plugging this in \eqref{Eq_ista_thm_proof_3} yields \eqref{Eq_ista_thm}.

\section{The Connection Between ALISTA and IDBP with $\ell_1$-Norm Prior} 
\label{app:alista}

As pointed out in \cite{tirer2019back}, the IDBP algorithm \cite{tirer2019image} is essentially the proximal gradient method \eqref{Eq_ista}, applied on $\ell_{BP}({\x})+\beta s({\x})$. 
For the special case where $s({\x})=\|{\x}\|_1$, we have that the proximal mapping 
$\mathrm{prox}_{\mu \beta s(\cdot)}({\z}) = \argmin{{\x}} \, \frac{1}{2} \| {\z} - {\x} \|_2^2 + \mu \beta s({\x})$
is the soft-thresholding operator
\begin{align}
\label{def_prox_ell1}
\mathrm{prox}_{\mu \beta s(\cdot)}({\z}) = \mathcal{T}_{\mu \beta}({\z}),
\end{align}
where $[\mathcal{T}_{\theta}({\z})]_i=\mathrm{sign}({z}_i)\mathrm{max}(|{z}_i|-\theta,0)$.
For the BP term, one can set the step-size $\mu_{BP}=1$. 
Recalling 
$\nabla \ell_{BP}({\x})$ given in \eqref{Eq_fidelity_grads}, we get the $\ell_1$-IDBP
\begin{align}
\label{Eq_ista_ell1_bp}
{\x}_{t+1} =  \mathcal{T}_{\beta}({\x}_{t} - \A^\dagger ( \A {\x}_{t} - \y )).
\end{align}

When using the traditional LS rather than the BP term, instead of \eqref{Eq_ista_ell1_bp}, one gets the popular ISTA algorithm \cite{daubechies2004iterative} (with step-size $\mu_{LS}$)
\begin{align}
\label{Eq_ista_ell1_ls}
{\x}_{t+1} =  \mathcal{T}_{\mu_{LS}\beta}({\x}_{t} - \mu_{LS} \A^T ( \A {\x}_{t} - \y )).
\end{align}
Since ISTA requires a large number of iterations, the seminal LISTA paper \cite{gregor2010learning}
suggested to reduce the computational complexity by unrolling a few ISTA iterations and learning (offline) the linear operators and the soft-thresholds for each iteration that will give the desired result.

The ALISTA paper \cite{liu2019alista} demonstrated that, for very sparse signals, similar convergence rate as LISTA can be obtained by the following scheme (equation (15) in \cite{liu2019alista})
\begin{align}
\label{Eq_alista}
{\x}_{t+1} =  \mathcal{T}_{\theta_t}({\x}_{t} - \mu_{t} \tilde{\W}^T ( \A {\x}_{t} - \y )),
\end{align}
where only the soft-threshold $\theta_t$ and the step-size $\mu_{t}$ are learned for each iteration, while the matrix $\tilde{\W}$ is analytically obtained.
In the implementation of ALISTA (and its follow-up papers \cite{wu2019sparse,behrens2020neurally}), they obtain $\tilde{\W}$ by numerical minimization of the following problem (see equation (16) in \cite{liu2019alista} and Appendix~E.1 there):
\begin{align}
\label{Eq_alista_W}
    \tilde{\W} = &\argmin{\overline{\W}} \, \| \overline{\W}^T \A \|_F^2 
    \,\,\,\,\,\, 
    \mathrm{s.t.} \,\,\, \overline{\W}[:,i]^T\A[:,i]=1, \,\,\,\, 1\leq i \leq n.
\end{align}
However, they 
have not used 
the fact that \eqref{Eq_alista_W} has a closed-form solution\footnote{It is obtained by observing that each column of $\overline{\W}$ in \eqref{Eq_alista_W} can be optimized independently.} given by
\begin{align}
\label{Eq_alista_W_sol}
    \tilde{\W} = (\A\A^T)^{-1}\A\bLambda = (\A^\dagger)^T \bLambda,
\end{align}
where $\bLambda$ is an $n \times n$ diagonal matrix of values given by $\{ \lambda_i = \left ( \A[:,i]^T(\A\A^T)^{-1}\A[:,i] \right )^{-1} \}_{i=1}^n$. 
Therefore, 
the ALISTA iteration may be read as
\begin{align}
\label{Eq_alista2}
{\x}_{t+1} =  \mathcal{T}_{\theta_t}({\x}_{t} - \mu_{t} \bLambda \A^\dagger ( \A {\x}_{t} - \y )).
\end{align}
Moreover, observing the learned values of $\{ \mu_{t} \}$ and $\{ \theta_t \}$ for Gaussian compressed sensing (CS) with $m/n=0.5$ (Figure 2 in \cite{liu2019alista}), we see that $\mu_{t}$ fluctuates around 1, while $\theta_t$ monotonically decreases. Note that for $\mu_{t}=1$ we have that decreasing $\theta_t$ is equivalent to decreasing the regularization parameter $\beta$ (starting from a large value for $\beta$ and monotonically decreasing it, is a known way to accelerate 
sparse coding algorithms, e.g., see \cite{jiao2017iterative,zarka2019deep}).
Therefore,  
in essence, the only key difference between ALISTA \eqref{Eq_alista2} and the $\ell_1$-IDBP \eqref{Eq_ista_ell1_bp} is the matrix $\bLambda$ that only normalizes the rows of $\A^\dagger$.
Without this matrix, \eqref{Eq_alista2} is simply $\ell_1$-IDBP with per-iteration tuning of $\beta$.

\begin{figure}[t]
  \centering
    \centering\includegraphics[width=200pt]{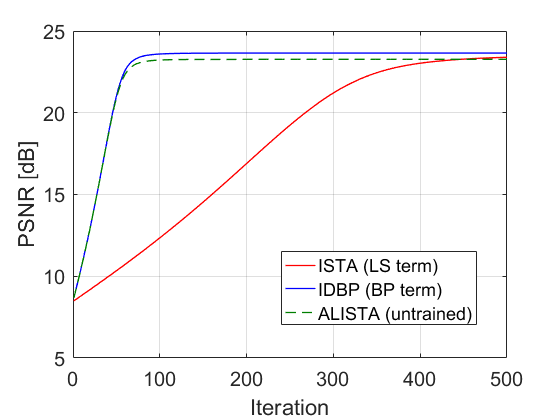}
  \caption{Compressed sensing with $m/n=0.5$ Gaussian measurements and SNR of 20dB.
   PSNR (averaged over 4 test images) versus iteration number, for ISTA (based on LS), untrained ALISTA, and $\ell_1$-IDBP (based on BP), with $\beta=4.64$.}
\label{fig:CS_results_with_alista}
\end{figure}

Fig.~\ref{fig:CS_results_with_alista} shows 
the results of the three algorithms: ISTA (with the typical  step-size $\mu_{LS} = 1/\sigma_{max}(\A\A^T)$), untrained ALISTA (with $\mu_{t}=1$ and $\theta_t=\beta$), and $\ell_1$-IDBP (with $\mu_{BP}=1$). We consider the same Gaussian CS scenario that has led to Fig.~\ref{fig:CS_results_fista} (for FISTA) in Section~\ref{sec:exp_l1}. 
The columns of $\A$ are normalized to have the unit $\ell_2$ norm, as assumed in \cite{liu2019alista}. 
All the algorithms are initialized with ${\x}_0=\A^\dagger\y$ and use $\beta=4.64$ that was shown to be good for all the algorithms in Fig.~\ref{fig:CS_results_fista}.
Note that the curves of the untrained ALISTA and $\ell_1$-IDBP 
in Fig.~\ref{fig:CS_results_with_alista} 
have almost identical convergence rate,  
which means that $\bLambda$ in ALISTA is not significant in this case. 
Both of them 
are much faster than ISTA (which is based on the LS term).
Thus, similar to IDBP, ALISTA enjoys the benefits of the BP term, 
which provides it a good starting point and allows it to reach very fast convergence in \cite{liu2019alista}
with only a very small number of parameters that are learned.

As for the existing analysis of ALISTA, note that it is based on the notion of mutual coherence that is restricted to sparse signals and yields over-pessimistic guarantees (e.g., for a Gaussian matrix $\A$ of size $250 \times 500$, the generalized mutual coherence of $\tilde{\W}$, denoted by $\tilde{\mu}$,  
is typically larger than 0.2, which implies that the convergence theorem in \cite{liu2019alista} holds only for signals in $\mathbb{R}^{500}$ with less than $(1+1/\tilde{\mu})/2 < 3$ non-zero elements). On the other hand, the ``restricted smallest eigenvalue" (restricted to the set $\mathcal{C}_{s}(\x_{*})$) analysis used in our paper 
allows less demanding sparsity levels and 
covers more general low-dimensional signal models. 
Prior works also showed different cases where ``restricted smallest eigenvalue" based analysis provided tight phase transitions and bounds for the LS objective (e.g., see the discussions in Section~\ref{sec:convergence}).

\section{Fundamental Differences Between Preconditioning and the BP Approach}
\label{app:precond}

It is important to 
note that using the BP term for ill-posed problem (i.e., minimizing $f_{BP}({\x})=\ell_{BP}({\x})+\beta s({\x})$ rather than $f_{LS}({\x})=\ell_{LS}({\x})+\beta s({\x})$) is fundamentally different than
the technique of preconditioning 
(discussed in unconstrained optimization literature, e.g., see \cite{saad2003iterative}) 
that 
is used to accelerate gradient-based methods {\em without changing} the considered objective function (and its minimizer).
In more detail, 
in order to perform preconditioning when minimizing an objective $f({\x})$, one multiplies its gradient $\nabla f({\x})$ by an invertible matrix $\P^{-1}$ with the goal of facilitating the optimization space curvature (taking this to the extreme, we have the Newton method where $\P$ is actually adaptive and equals the Hessian $\nabla^2 f({\x})$). Note that $\P^{-1}\nabla f({\x})=0 \Longleftrightarrow \nabla f({\x})=0$, and so, preconditioning does not change the minimizers.
On the other hand, the minimizers of $f_{BP}({\x})$ and $f_{LS}({\x})$ are different, e.g., see in \cite{tirer2019back} their different closed-form solutions when $s({\x})$ is Tikhonov regularization.

Another key point is that existing acceleration results for preconditioning require strong convexity of the objective. However, since we consider ill-posed problems, $\ell_{LS}({\x})$ and $\ell_{BP}({\x})$ are not strongly convex and so are the most widely used priors $s({\x})$ (such as those considered in this paper).
Therefore, previous preconditioning results do not explain the faster convergence observed when using the BP term 
instead of 
the LS term. In fact, our study reveals that 
this advantage 
depends on the amount of restrictions that the prior $s({\x})$ (implicitly) imposes on ${\x}$ in the null space of $\A$. 
For example, as preconditioning theory considers well-posed problems, it states that the more bad-conditioned a linear system $\A$ is, the higher the acceleration that can be obtained by the preconditioning. 
However, our experiments (e.g., see Figs.~\ref{fig:CS_results_dcgan} and \ref{fig:SR_results_dcgan} in Section~\ref{sec:exp_dcgan}) 
show that the convergence advantage of BP over LS for super-resolution (where $\A\A^T$ is badly conditioned) can be smaller than 
for compressed sensing (where $\A\A^T$ is much better conditioned). With our analysis, this can be explained by a weaker restriction that the prior imposes on the null space in the super-resolution case.

\section{More Details on the Considered Step-Size}
\label{app:StepSize}

As discussed in Section~\ref{sec:convergence}, in this paper we examine optimization schemes with step-size of 1 over the Lipschitz constant $\nabla \ell(\cdot)$. 
This step-size is the most common choice of practitioners.

To explain the popularity of this step-size, let us consider the minimization of a general convex function $\ell(\cdot): \mathbb{R}^n \rightarrow \mathbb{R}$ (without any additional prior term). Let $L$ be the Lipschitz constant of $\nabla \ell(\cdot)$, i.e., $\| \nabla \ell(\x_2) - \nabla \ell(\x_1) \|_2 \leq L \| \x_2 - \x_1 \|_2$ for all $\x_2, \x_1$. Equivalently  \cite{beck2017first}, this implies that for all $\x_2, \x_1$ we have 
\begin{align}
\label{Eq_smoothness_cond}
\ell(\x_2) - \ell(\x_1) \leq \nabla \ell(\x_1)^T (\x_2 - \x_1) + \frac{L}{2} \| \x_2 - \x_1 \|_2^2.
\end{align}
Now, recall that a gradient descent iteration for minimizing $\ell(\cdot)$ with step-size $\mu$ is given by 
\begin{align}
\label{Eq_gd_iter}
{\x}_{t+1} = {\x}_{t} - \mu \nabla \ell({\x}_{t}).
\end{align}
Using $\x_1={\x}_{t}$ and $\x_2={\x}_{t+1}$ in \eqref{Eq_smoothness_cond}, we get
\begin{align}
\label{Eq_gd_iter2}
\ell({\x}_{t+1}) - \ell({\x}_{t}) \leq -\mu \| \nabla \ell({\x}_{t}) \|_2^2 + \mu^2 \frac{L}{2} \| \nabla \ell({\x}_{t}) \|_2^2.
\end{align}
Note that the right-hand side (RHS) of \eqref{Eq_gd_iter2} is a simple parabola in $\mu$. 
Therefore, while any constant step-size $\mu \in (0,\frac{2}{L})$ ensures the convergence of gradient descent, the choice $\mu=\frac{1}{L}$ is optimal --- it minimizes the RHS of \eqref{Eq_gd_iter2}. 
We also note that, as far as we know, the guarantees for convergence rates of $\mathcal{O}(\frac{1}{t})$ and $\mathcal{O}(\frac{1}{t^2})$ for plain and accelerated proximal gradient algorithms, respectively, applied on (non-strongly) convex functions require that a {\em constant} step-size obeys $\mu \leq \frac{1}{L}$ \cite{beck2009fast,beck2017first}.

\begin{table}[h]
\small
\renewcommand{\arraystretch}{1.3}
\caption{PSNR [dB] 
(averaged over 10 test images) of PGD with 50 iterations and 
DCGAN prior for compressed sensing with $m/n=0.5$ Gaussian measurements and SNR of 20dB.} \label{table:dcgan2}
\centering
    \begin{tabular}{ | l | l | l |}
    \hline
            &  LS objective & BP objective \\ \hline
  CS $m/n=0.5$  & 23.14 & 23.57 \\ \hline
    \end{tabular}
\end{table}

\begin{table}[h]
\small
\renewcommand{\arraystretch}{1.3}
\caption{PSNR [dB] 
(averaged over 10 test images) of PGD with 50 iterations and 
DCGAN prior for super-resolution with Gaussian filter and scale factor of 3.} \label{table:dcgan2b}
\centering
    \begin{tabular}{ | l | l | l |}
    \hline
            &  LS objective & BP objective \\ \hline
  SR x3  & 23.29 & 23.90 \\ \hline
    \end{tabular}
\end{table}

\begin{figure}[h]
 \centering
  \subcaptionbox*{{ img. 202592}  }{%
  \includegraphics[width=0.15\columnwidth]{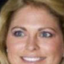}}
  \subcaptionbox*{{ $\A^\dagger\y$} }{%
  \includegraphics[width=0.15\columnwidth]{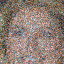}}
  \subcaptionbox*{{ LS (24.75 dB)} }{%
  \includegraphics[width=0.15\columnwidth]{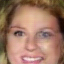}}
  \subcaptionbox*{{ BP (24.76 dB)} }{%
  \includegraphics[width=0.15\columnwidth]{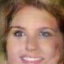}}\\
  \vspace{2mm}
  \subcaptionbox*{{ img. 202597 } }{%
  \includegraphics[width=0.15\columnwidth]{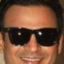}}
  \subcaptionbox*{{ $\A^\dagger\y$} }{%
  \includegraphics[width=0.15\columnwidth]{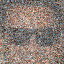}}
  \subcaptionbox*{{ LS (23.56 dB)} }{%
  \includegraphics[width=0.15\columnwidth]{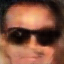}}
  \subcaptionbox*{{ BP (24.00 dB)} }{%
  \includegraphics[width=0.15\columnwidth]{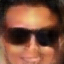}}\\
\caption{Compressed sensing with $m/n=0.5$ Gaussian measurements and SNR of 20dB, using PGD with 50 iterations and DCGAN prior. 
}\label{fig:CS_dcgan2}

\vspace{10mm}

 \centering
  \subcaptionbox*{{ img. 202596}  }{%
  \includegraphics[width=0.15\columnwidth]{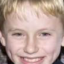}}
  \subcaptionbox*{{ Bicubic} }{%
  \includegraphics[width=0.15\columnwidth]{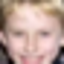}}
  \subcaptionbox*{{ LS (23.24 dB)} }{%
  \includegraphics[width=0.15\columnwidth]{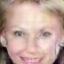}}
  \subcaptionbox*{{ BP (24.03 dB)} }{%
  \includegraphics[width=0.15\columnwidth]{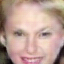}}\\
  \vspace{2mm}  
  \subcaptionbox*{{ img. 202598}  }{%
  \includegraphics[width=0.15\columnwidth]{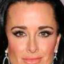}}
  \subcaptionbox*{{ Bicubic} }{%
  \includegraphics[width=0.15\columnwidth]{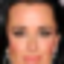}}
  \subcaptionbox*{{ LS (23.48 dB)} }{%
  \includegraphics[width=0.15\columnwidth]{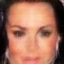}}
  \subcaptionbox*{{ BP (24.62 dB)} }{%
  \includegraphics[width=0.15\columnwidth]{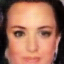}}\\
\caption{Super-resolution with Gaussian filter and scale factor of 3, using PGD with 50 iterations and DCGAN prior. 
}\label{fig:SR_dcgan}
\end{figure}

\section{Quantitative and Visual  
Results for PGD with DCGAN Prior}
\label{app:visual}

In this section we present quantitative results (average PSNR), as well as several visual results, which are obtained for the experiments in Section~\ref{sec:exp_dcgan}.

For the compressed sensing experiments (described in the main body of the paper), Table~\ref{table:dcgan2} shows the PSNR of the reconstructions, averaged over the test images. Several visual results are shown in Fig.~\ref{fig:CS_dcgan2}.

For the super-resolution experiments (described in the main body of the paper), Table~\ref{table:dcgan2b} shows the PSNR of the reconstructions, averaged over the test images. Several visual results are shown in Fig.~\ref{fig:SR_dcgan}.

\section*{Acknowledgements}
The authors would like to thank Amir Beck for fruitful discussions.

\bibliographystyle{siamplain}
\bibliography{final_article}

\end{document}